  \numberwithin{equation}{section} 
\def\paragraph{\@startsection{paragraph}{4}%
  \z@\z@{-\fontdimen2\font}%
  {\normalfont\itshape}}
\DeclareMathOperator{\one}{\mathbbm{1}} 
\renewcommand{\o}[1]{o\left(#1\right)}
\renewcommand{\O}[1]{O\left(#1\right)}
\theoremstyle{plain} 
    \newtheorem{theorem}{Theorem}
    \newtheorem{lemma}[theorem]{Lemma}
    \newtheorem{proposition}[theorem]{Proposition}
\theoremstyle{definition} 
    \newtheorem{remark}[theorem]{Remark}
\DeclareMathOperator{\Z}{\mathbb{Z}}
\newcommand{\eps}{\varepsilon}
\newcommand{\prob}{\mathbf{P}}
\newcommand{\E}{\mathbf{E}}
\newcommand{\revise}[1]{{\color{black}#1}}
\definecolor{xdxdff}{rgb}{0.49019607843137253,0.49019607843137253,1}
\definecolor{ffffff}{rgb}{1,1,1}
\definecolor{ududff}{rgb}{0.30196078431372547,0.30196078431372547,1}
\definecolor{zzttqq}{rgb}{0.6,0.2,0}
\begin{document}
\title[Inhomogeneous random graphs with infinite-mean fitness]{Inhomogeneous random graphs \\ with infinite-mean fitness variables}

\author[L.~Avena]{Luca Avena} 
\address{{ Leiden University, Mathematical Institute, Niels Bohrweg 1 2333 CA, Leiden. The Netherlands. \&
Dipartimento di Matematica e Informatica "Ulisse Dini" - Universit\`a degli Studi di Firenze, Italy}}
\email{l.avena@math.leidenuniv.nl, luca.avena@unifi.it}
\author[D. Garlaschelli]{Diego Garlaschelli}
\address{Lorentz Institute for Theoretical Physics, Leiden University, P.O.\  Box 9504, 2300 RA Leiden, The Netherlands
\& IMT School for Advanced Studies, Piazza S.\ Francesco 19, 55100 Lucca, Italy
\& INdAM-GNAMPA Istituto Nazionale di Alta Matematica, Italy}
\email{garlaschelli@lorentz.leidenuniv.nl}
\author[R.~S.~Hazra]{Rajat Subhra Hazra}
\address{University of Leiden, Niels Bohrweg 1, 2333 CA, Leiden, The Netherlands}
\email{r.s.hazra@math.leidenuniv.nl}
\author[M.~Lalli]{Margherita Lalli}
\address{IMT School for Advanced Studies, Piazza S.\ Francesco 19, 55100 Lucca, Italy}
\email{margherita.lalli@imtlucca.it}

\date{\today}

\begin{abstract}
We consider an inhomogeneous Erd\H{o}s-R\'enyi random graph ensemble with exponentially decaying random disconnection probabilities determined by an i.i.d. field of variables with heavy tails and infinite mean associated to the vertices of the graph. 
This model was recently investigated in the physics literature in~\cite{GMD2020} as a scale-invariant random graph within the context of network renormalization.
From a mathematical perspective, the model fits in the class of scale-free inhomogeneous random graphs whose asymptotic geometrical features have been recently attracting interest. While for this type of graphs several results are known when the underlying vertex variables have finite mean and variance, here instead we consider the case of one-sided stable variables with necessarily infinite mean. 
To simplify our analysis, we assume that the variables are sampled from a Pareto distribution with parameter $\alpha\in(0,1)$. We start by characterizing the asymptotic distributions of the typical degrees and some related observables.
In particular, we show that the degree of a vertex converges in distribution, after proper scaling, to a mixed Poisson law. We then show that correlations among degrees of different vertices are asymptotically non-vanishing, but at the same time a form of asymptotic tail independence is found when looking at the behavior of the joint Laplace transform around zero. 
\revise{Moreover, we present some findings concerning the asymptotic density of wedges and triangles and show a cross-over for the existence of dust (i.e. disconnected vertices).}
\end{abstract}
\keywords{generalized random graphs, inhomogeneous random graph, infinite mean}
\subjclass[2000]{60G15, 82B20, 82B41, 60G70 }
\maketitle

\section{Introduction}\label{intro}

In this article we consider a class of inhomogeneous Erd\H{o}s-R\'enyi random graphs on $n$ vertices. Our vertex set $V$ is denoted by $[n]=\{1, 2, \ldots, n\}$ and on each vertex we assign independent weights (or `fitness' variables) $(W_i)_{i\in [n]}$ distributed according to a common distribution $F_W(\cdot)$ with $1-F_W(x)\sim x^{-\alpha}$ for some $\alpha\in (0,1)$. Therefore the weights have infinite mean. Conditioned on the weights, an edge between two distinct vertices $i$ and $j$ is drawn independently with probability 
\begin{equation}\label{eq:connectionprob0}
p_{ij}= 1-\exp\left( - \eps W_i W_j\right)
\end{equation}
where $\eps$ is a parameter tuning the overall density of edges in the graph and playing a crucial rule in the analysis of the model. This inhomogeneous random graph model with infinite-mean weights has been recently proposed in~\cite{GMD2020} in the statistical physics literature, where it was studied as a scale-invariant random graph under hierarchical coarse-graining of vertices. In particular, the model follows from the fundamental requirement that both the connection probability $p_{ij}$ and the fitness distribution $F_W(x)$ retain the same mathematical form when applied to the `renormalized' graph obtained by agglomerating vertices into equally sized `supervertices', where the fitness of each supervertex is defined as the sum of the fitnesses of the constituent vertices~[\cite{GMD2020}]. This agglomeration process provides a renormalization scheme for the graph that, by admitting any homogeneous partition of vertices, does not require the notion of vertex coordinates in some underlying metric space, unlike other models based on the idea of geometric renormalization where `closer' vertices are merged~[\cite{garcia2018multiscale,boguna2021network}]. 
We summarize the main ideas behind the original model later in this paper.

More in general, Eq.~\eqref{eq:connectionprob0} represents a special example of connection probability between vertices $i$ and $j$ defined as $\kappa_n(W_i,W_j)$, where $\kappa_n: [0,\, 
\infty)^2\to [0,1]$ is a well-behaved function and the weights are drawn independently from a certain distribution.
In the physics literature, these are called `fitness' or `hidden variable' network models~[\cite{caldarelli2002scale,boguna2003class,garlaschelli2007self}]. In the mathematical literature, a well-known example is the generalized random graph model~[\cite{chung2002average,van2013critical}].
In most of the cases considered in the literature so far, due to the integrability conditions on $\kappa_n$ and moment properties of $F_W$, these models have a locally tree-like structure. We refer to Chapter 6 of~\cite{van2016random} for the properties of the degree distribution and to~\cite{bollobas2007phase, van2022} for further geometric structures. 
Models with exactly the same connection probability as in Eq.~\eqref{eq:connectionprob0}, but with finite-mean weights, have been considered previously~[\cite{Rodgers_2005,norros2006conditionally,van2013critical, bhamidiscaling, bhamidi2018multiplicative}].
In this article we are instead interested in the non-standard case of infinite mean of the weights, corresponding to the choice $\alpha\in (0,1)$ as mentioned above. A combination of the specific form of the connection probability~\eqref{eq:connectionprob0} and these heavy-tailed weights make the model  interesting. We believe that certain mathematical features of an \emph{ultra-small world} network, where the degrees exhibit infinite variance, can be captured by this model. In this case, due to the absence of a finite mean, the typical distances may be much slower than the doubly-logarithmic behavior (in relation to the graph's size) observed in ultra-small networks (refer to \cite{van2017scale} for further discussion on this).

Another model where a connection probability similar to the one in Eq.~\eqref{eq:connectionprob0} occurs, but with an additional notion of embedding geometry, is the scale-free percolation model on $\Z^d$. 
The vertex set in this graph is no longer a finite set of points and the connection probabilities depend also on the spatial positions of the vertices. 
Here also one starts with independent weights $(W_x)_{x\in \Z^d}$ and distributed according to $F_W(\cdot)$, where $F_W$ has a power law index of $\beta\in (0, \infty)$ and conditioned on the weights, vertices $x$ and $y$ are connected independently with probability 
$$\tilde p_{xy}= 1-\exp\left(- \frac{\lambda W_x W_y}{\|x-y\|^s}\right),$$
where $s$ and $\lambda$ are some positive parameters. 
The model was introduced in \cite{deijfen2013scale}, where it was shown the degree of distribution have a power law exponent of parameter $-\tau=-s\beta/d$. 
The asymptotics of the maximum degree was derived recently in \cite{bhattacharjee2022large} and further properties of the chemical distances were studied in \cite{deprez2015inhomogeneous, heydenreich2017structures, van2017explosion}. 
As we see in some cases the degree can be infinite too. 
The mixing properties of the scale free percolation on a torus of side length $n$ was studied in \cite{cipriani2021scale}. 
In our model, the distance term $\|x-y\|^{-s}$ is not there and hence on one hand the form becomes easier, but on the other hand many useful integrability properties are lost due to the fact that interactions do not decay with distance.

In this paper, we show some important properties of our model. In particular, we show that the average degree grows like $\log n$ if we choose the specific scaling $\eps= n^{-\frac{1}{\alpha}}$. In this case, the cumulative degree distribution roughly behaves like a power law with exponent $-1$. 
In the literature for random graphs with degree sequences having infinite mean, this falls in the critical case of exponent $\tau=1$. 
The configuration model with given degree sequence $(D_i)_{i\in [n]}$ i.i.d. with law $D$ having power law exponent $\tau\in (0,\, 1)$ was studied in \cite{van2005distances}. It was shown that the typical distance between two randomly chosen points is either $2$ or $3$. It was also shown that for $\tau=1$ similar ultra small world behaviour is true.  Instead of the configuration model, we study the properties of the degree distribution for the model which also naturally gives rise to degree distributions with power law exponent $-1$.
 Additionally, we investigate certain dependencies between the degrees of different vertices, the asymptotic density of wedges and triangles, and \revise{some first observations on the subtle connectivity properties of the random graph }.

The rest of the paper is organized as follows: in Section~\ref{Section: Model and main results} we state our main results, in Section~\ref{section: SIM} we discuss the connection to the original model by~\cite{GMD2020}, and finally in Sections~\ref{Proof: Th 1},~\ref{Proof: Th 2},~\ref{Proof: Th 3} we prove our results.


\section{Model and main results}  \label{Section: Model and main results}
The formal definition of the model considered here reads as follows.
Let the vertex set be given by $[n]= \{1,2, \ldots, n\}$ and  let $\eps=\eps_n>0$ be a parameter which will depend on $n$. The random graph with law $\prob$ is constructed in the following way:
\begin{itemize}
\item[(a)] Sample $n$ independent weights $(W_i)$, under $\prob$, according to a Pareto distribution with parameter $\alpha\in (0, 1)$, that is, 
\begin{equation}\label{eq:distF}
1- F_W(w)=\prob(W_i> w)=\begin{cases} w^{-\alpha}, & \,  w>1,\\ 1,& 0< w\le 1.\end{cases}.
\end{equation}

\item[(b)] For all $n\ge 1$,  given the weights $(W_i)_{i\in [n]}$, construct the random graph $G_n$ by joining edges independently with probability given by \eqref{eq:connectionprob0}. That is, 
\begin{equation}\label{eq:connectionprob}p_{ij}:=\prob( i \leftrightarrow j\mid W_i, W_j)= 1-\exp(- \eps_n W_i W_j)\end{equation}
where the event $\{i\leftrightarrow j\}$ means that vertices $i$ and $j$ are connected by an edge in the graph.
\end{itemize}
We will denote the above random graph by $\mathbf{G}_n(\alpha,\eps)$ as it depends on the parameters $\alpha$ and $\eps$. 
Self-loops and multi-edges are not allowed and hence the final graph is given by a simple graph on $n$ vertices. \\
Note that, in choosing the distribution of the weights in \eqref{eq:distF}, we could have alternatively started with a regularly varying random variable with power law exponent $-\alpha$, i.e. $\prob(W_i>w)= w^{-\alpha} L(w)$ where $L(\cdot)$ is a slowly varying function, that is, for any $w>0$,
$$\lim_{t\to\infty}\frac{L(wt)}{L(t)}=1.$$
It is our belief that  most of the results stated in this article will go through in presence of a slowly varying function, even if the analysis would be more involved.  In particular, as we explain in Section~\ref{section: SIM}, in the approach of \cite{GMD2020}, the weights are drawn from a one-sided $\alpha$-stable distribution with scale parameter $\gamma$, and not from a Pareto (the $\alpha$-stable following from the requirement of invariance of the fitness distribution under graph renormalization). Although the computations will go through if one assume $\prob(W_i>w)\sim w^{-\alpha}$ as $w\to\infty$, which is the case for $\alpha$-stable distributions. In this work, we however refrain from going into this technical side.\\

\paragraph{Notation} 
Convergence in distribution and convergence in probability will be denoted respectively by $\overset{d}\longrightarrow$ and $\overset{P}\longrightarrow$.  
$\E[\cdot]$ is the expectation with respect to $\prob$ and the conditional expectation with respect to the weight $W$ of a typical vertex is denoted by $\E_W[ \cdot]= \E[ \cdot| W]$. We write $X|W$ to denote the distribution of the random variable, conditioned on the variable $W$. Let $(a_{ij})_{1\leq i, j\leq n}$ be the indicator variables
$(\one_{i\leftrightarrow j})_{1\leq i, j\leq n}$.
As standard, as $n\to \infty$, we will write $f(n)\sim g(n)$ if $f(n)/g(n)\to 1$,  $f(n)=\o{g(n)}$ if $f(n)/g(n)\to 0$ and $f(n)=\O{g(n)}$ if $f(n)/g(n)\leq C$ for some $C>0$, for $n$ large enough. Lastly, $f(n)\asymp g(n)$ denote that there exists positive constants $c_1$ and $C_2$ such that 
$$c_1  \le \liminf_{n\to\infty} f(n)/g(n)\le \limsup_{n\to\infty} f(n)/g(n) \le C_2.$$\\

\subsection{Degrees}

Our first theorem characterises the behaviour of a typical degree and of the joint distribution of the degrees.  Consider the degree of vertex $i\in [n]$ defined as 
\begin{equation} \label{Def D}
    D_n(i) = \sum_{j\neq i} a_{ij},
\end{equation} 
where $a_{ij}$ denotes the entries of the adjacency matrix of the graph, that is,
$$a_{ij}= \begin{cases} 1 & \text{ if $i\leftrightarrow j$}\\
 0 & \text{ otherwise}.
 \end{cases}$$

\begin{theorem}\label{Th1}[{\bf Scaling and asymptotic of the degrees.}]
Consider the graph $\mathbf G_n(\alpha, \eps)$ and let $D_n(i)$ the degree of the vertex $i\in [n]$. 
\begin{itemize}
 \item [(i)] [{\bf Expected degree.}] The expected degree of a vertex $i$ scales as follows $$\mathbf{E}\left[ D_n(i) \right] \sim   - (n-1) \Gamma(1-\alpha)   \eps^{\alpha} \log{\eps^{\alpha}}, \text{ as $\eps\downarrow 0$}.$$
 In particular, if $\eps_n= n^{-1/\alpha}$ then we have
 \begin{equation}\label{eq:expectdeg}
 \E[ D_n(i)] \sim \Gamma(1-\alpha) \log n \text{ as } n\to \infty.
 \end{equation}
\item[(ii)] [{\bf Asymptotic degree distribution.}] Let $\eps_n = n^{-\frac{1}{\alpha}}$, then  for all $i\in [n]$
$$D_n(i) \overset{d}\longrightarrow D_{\infty} \text{ as $n\to\infty$},$$
where $D_\infty$ is a mixed Poisson random variable  with parameter $\Lambda =  \Gamma(1-\alpha) W^{\alpha}$, where $W$ has distribution \eqref{eq:distF}. Additionally,  we have 
\begin{equation}\label{eq:taildegree}
\prob(D_\infty> x) \sim  \Gamma(1-\alpha) x^{-1} \text{ as $x\to\infty$}.
\end{equation}

\item[(iii)] [{\bf Asymptotic joint degree behaviour.}] Let $D_\infty(i)$ and $D_\infty(j)$ be the asymptotic degree distribution of two arbitrary distinct vertices $i,j\in\mathbb{N}$. Then

\begin{equation}\label{NonZero}\E \left[ t^{D_\infty(i)} s^{D_\infty(j)} \right]\neq \E\left[ t^{D_\infty(i)} \right] \E\left[s^{D_\infty(j)} \right],\quad \text{ for fixed } t, s\in (0,1),
\end{equation}
and for $s, t$ sufficiently close to $1$ we have,
\begin{align}\label{Zero}
 &\Big{|} \E\left[ t^{D_\infty(i)} s^{D_\infty(j)} \right] - \E\left[ t^{D_\infty(i)} \right] \E\left[s^{D_\infty(j)} \right] \Big{|}\nonumber\\
 &\leq \mathrm{O}\left( (1-s)(1-t)\log\left(\left(1+\frac{1}{\Gamma(1-\alpha)(1-s)}\right)\left(1+\frac{1}{\Gamma(1-\alpha)(1-t)}\right)\right)\right)+C((1-t)+(1-s)),
\end{align}
for some constant $C\in (0,\infty)$.

\end{itemize}
 \end{theorem}
 
 We shall  prove Theorem~\ref{Th1} in Section \ref{Proof: Th 1}. The first part of the result shows that, in the chosen regime, the average degree of the graph diverges logarithmically. This indeed rules out any kind of local weak limit of the graph. We also see in the second part that asymptotically degrees have cumulative power law exponent $-1$, for any  $\alpha\in(0,1)$. This proves rigorously a result that was observed with different analytical and numerical arguments in the original paper~[\cite{GMD2020}], as we further discuss in Section~\ref{section: SIM}. It is expected that when $\eps_n=n^{-1/\alpha}$, we should have  $\prob( D_n(i)>x)\asymp x^{-1}$ as $x\to \infty$. 
 
 The third part of the result deserves further comments.
  Indeed Eq.~\eqref{NonZero} shows that $D_\infty(i)$ and $D_\infty(j)$ are \emph{not independent}. In the generalized random graph model, this is a surprising phenomenon. If we consider a generalized random graph, with weights as described in \eqref{eq:distF} and 
 $$\widetilde p_{ij}= \frac{ W_i W_j}{n^{1/\alpha}+ W_i W_j}$$
 then it follows from Theorem 6.14 of \cite{van2016random} that asymptotic degree distribution has the same behaviour as our model and the asymptotic degree distributions are independent. Although there is no independence as~\eqref{NonZero} shows, we still believe that the following will be true
\begin{equation}\label{Rajat} \Big{|}\prob(D_\infty(i) > x, D_\infty(j) > x)-\prob( D_\infty(i)>x) \prob( D_{\infty}(j)>x)\Big{|}=\mathrm{o}\left(\prob( D_\infty(i)>x) \prob( D_{\infty}(j)>x)\right),\end{equation}
and hence the limiting vector will be \emph{asymptotically tail independent}. Although not provided with a rigorous proof yet, this conjecture is supported by numerical simulations (see Fig.\ref{fig:tail-independence}). 
\begin{figure}[b]
    \centering
    \includegraphics[width=0.55\textwidth]{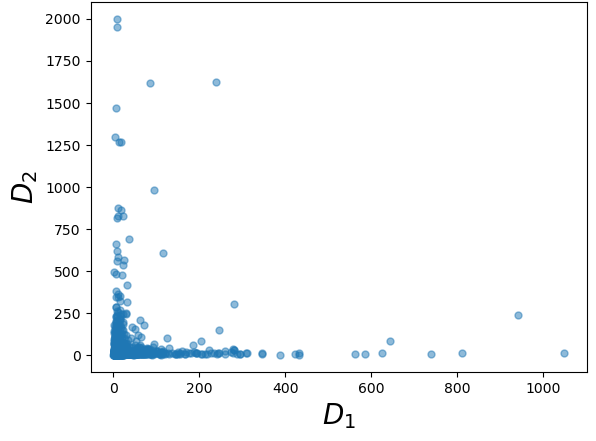}
    \caption{\textbf{Asymptotic tail independence between degrees}. Scatter-plot of the degrees of the vertices with labels $1$ and $2$ (assigned randomly but fixed for every realisation in the ensemble). Each point in the plot corresponds to one of $2000$ realizations of a network of $N=2000$ vertices, each generated as described at the beginning of Section \ref{Section: Model and main results} (see Eqs.~\eqref{eq:distF} and~\eqref{eq:connectionprob}).}
    \label{fig:tail-independence}
\end{figure}
Such a property of limiting degree was observed and proved using multivariate version of Karamata's Tauberian theorem for Preferential attachment models, see \cite{resnick2015tauberian}.
In our case, \eqref{Rajat} would be valid, given an explicit characterization of the complete joint distribution of the asymptotic degrees. Currently, we have not been able to verify the conditions outlined in \cite{resnick2015tauberian} for the application of their general multivariate Tauberian theorem. We hope to address this question in the future.

\subsection{Wedges, triangles and clustering}
Our second result concerns the number of wedges and triangles associated to a typical vertex $i\in[n]$, defined respectively as follows:
\begin{equation}\label{Def W_i}
    \mathbb{W}_n(i) := \frac{1}{2} \sum_{j\neq i} \sum_{k\neq i,j} a_{ij} a_{ik},  \quad\quad\quad
    \Delta_n(i) = \frac{1}{6} \sum_{j\neq i} \sum_{k\neq i,j} a_{ij} a_{ik} a_{jk}.
\end{equation}


\begin{theorem}\label{Th 2}{\bf[Triangles and Wedges of typical vertices.]} 
Consider the graph $\mathbf G_n(\alpha, \eps)$ and let $\mathbb{W}_n(i)$ and $\Delta_n(i)$ be the number of wedges and triangles at vertex $i\in [n]$. 
Then:
\begin{itemize}
    \item [(i)] [{\bf Average number of wedges.}] \revise{$$ \E\left[\mathbb{W}_n(i)\right]  \asymp \eps^{\alpha}  n^2 \text{ as } \eps\downarrow 0.$$
    In particular, when $\eps_n= n^{-1/\alpha}$, then  $$\E\left[\mathbb{W}_n(i)\right]  \asymp  n.$$}
   \item[(ii)] [{\bf Asymptotic distribution of wedges.}] Let $\eps_n= n^{-1/\alpha} $, then 
   $$\mathbb{W}_n(i) \overset{d}\longrightarrow \mathbb{W}_{\infty}(i)$$ where  $\mathbb{W}_{\infty}(i) = D_{\infty}(i)(D_{\infty}(i)-1)$ with $D_{\infty}(i)$ as in Theorem \ref{Th1}. Also,  we have
  
    $$\prob(\mathbb{W}_{\infty}(i)> x ) \sim  \Gamma(1-\alpha)x^{-1/2} \text{ as } x\to\infty. $$
    \item [(iii)] [{\bf Average number of triangles.}] \revise{ Let $i\in [n]$, the average number of triangles grows as follows:
    $$ \E\left[\Delta_n(i) \right] \asymp  \eps^{\frac{3}{2} \alpha} n^2 \text{ as } \eps\downarrow 0. $$ 
    In particular, when $\eps_n=n^{-1/\alpha}$ we have
    $$\E\left[\Delta_n(i) \right] \asymp  \sqrt{n} \text{ as } n\to \infty.$$}
    \item[(iv)][{\bf Concentration for the number of triangles.}] Let $\eps_n= n^{-1/\alpha}$ and $\Delta_n=\sum_{i\in [n]} \Delta_n(i)$ be the total number of triangles, then
    \revise{$$ \frac{\Delta_n}{ \E[\Delta_n]}\overset{\prob}\longrightarrow 1 \qquad \text{ and }\qquad \frac{\Delta_n(i)}{  \E[\Delta_n(i)]}\overset{\prob}\longrightarrow 1.$$}
\end{itemize}
\end{theorem}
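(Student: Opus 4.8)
The plan is to reduce every statement to a moment computation for the conditionally Bernoulli variables $a_{ij}$ and to use the exchangeability of the vertices, so that throughout I may take $i=1$ and express the relevant expectations through the single- and multi-edge probabilities. The basic object is $h(\lambda):=\E_W[1-\e^{-\lambda W}]=\int_1^\infty (1-\e^{-\lambda x})\alpha x^{-\alpha-1}\,dx$, the complementary Laplace transform of a Pareto weight; integration by parts gives $h(\lambda)\sim \Gamma(1-\alpha)\lambda^{\alpha}$ as $\lambda\downarrow 0$ and $h(\lambda)\to 1$ as $\lambda\to\infty$. I will repeatedly use the elementary identity $\int_0^\infty(1-\e^{-wa})w^{-s-1}\,dw=\tfrac1s\Gamma(1-s)\,a^{s}$ for $s\in(0,1)$, in particular the per-edge factor $\int_0^\infty(1-\e^{-w})w^{-\alpha/2-1}\,dw=-\Gamma(-\tfrac\alpha2)$.

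For part (i) I write $\E[\mathbb{W}_n(i)]=\tfrac{(n-1)(n-2)}{2}\E[a_{12}a_{13}]$, and, conditioning on $W_1$ and integrating out the two independent leaves, $\E[a_{12}a_{13}]=\E_{W_1}[h(\eps W_1)^2]$. The substitution $\lambda=\eps W_1$ gives $\E_{W_1}[h(\eps W_1)^2]=\alpha\eps^{\alpha}\int_{\eps}^{\infty}h(\lambda)^2\lambda^{-\alpha-1}\,d\lambda$; since the integrand is $\sim\Gamma(1-\alpha)^2\lambda^{\alpha-1}$ near $0$ and $\sim\lambda^{-\alpha-1}$ near $\infty$, it converges to a finite $C:=\int_0^\infty h(\lambda)^2\lambda^{-\alpha-1}\,d\lambda$. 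Writing $h(\lambda)^2=\E[(1-\e^{-\lambda W'})(1-\e^{-\lambda W''})]$ and evaluating the integral (again via the per-edge $\Gamma$-factor, now for the two edges of the wedge) yields $C=\alpha\,\Gamma^2(-\tfrac\alpha2)$, whence $\E[\mathbb{W}_n(i)]\sim \tfrac{\alpha^2\Gamma^2(-\alpha/2)}{2}\eps^{\alpha}n^2$. Here the dominant contribution comes from one large centre weight $W_1\sim\eps^{-1}$ with the two leaves near the cutoff scale, which is exactly why the scaling is $\eps^{\alpha}$ and not $\eps^{3\alpha/2}$. Part (ii) is then immediate: $\mathbb{W}_n(i)=\tfrac12 D_n(i)(D_n(i)-1)$ is a fixed continuous function of $D_n(i)$, so the continuous mapping theorem and Theorem~\ref{Th1}(ii) give the distributional limit, and the tail exponent $-1/2$ follows by inverting $x\mapsto\tfrac12x(x-1)$ and inserting $\prob(D_\infty(i)>x)\sim\Gamma(1-\alpha)x^{-1}$ from \eqref{eq:taildegree}.

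Part (iii) is the computational heart. Starting from $\E[\Delta_n(i)]=\tfrac{(n-1)(n-2)}{6}\E[(1-\e^{-\eps W_1W_2})(1-\e^{-\eps W_1W_3})(1-\e^{-\eps W_2W_3})]$ and rescaling all three weights by $W_\ell=u_\ell/\sqrt\eps$, the cutoff disappears in the limit and the expectation becomes $\alpha^3\eps^{3\alpha/2}I_3(1+o(1))$ with the symmetric integral $I_3=\int_{(0,\infty)^3}\prod_{a<b}(1-\e^{-u_au_b})\prod_\ell u_\ell^{-\alpha-1}\,du_\ell$, whose convergence I check in every corner regime. The decisive step is the change of variables to the edge products $(x,y,z)=(u_1u_2,u_1u_3,u_2u_3)$: in logarithmic coordinates this is the linear map of determinant $-2$, the weight measure transforms as $\prod_\ell u_\ell^{-\alpha-1}\,du_\ell=\tfrac12(xyz)^{-\alpha/2-1}\,dx\,dy\,dz$, and the integrand factorizes, so that $I_3=\tfrac12\big(\int_0^\infty(1-\e^{-w})w^{-\alpha/2-1}\,dw\big)^3=\tfrac12\big(-\Gamma(-\tfrac\alpha2)\big)^3=-\tfrac12\Gamma^3(-\tfrac\alpha2)$. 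This gives $\E[\Delta_n(i)]\sim -\tfrac{\alpha^3}{12}\Gamma^3(-\tfrac\alpha2)\eps^{3\alpha/2}n^2$ and, at $\eps_n=n^{-1/\alpha}$, the stated $\sqrt n$ growth.

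Finally, part (iv) is a weak law proved by the second-moment method. For the total count $\Delta_n=\sum_i\Delta_n(i)$ (the number of triangles) one has $\E[\Delta_n]=n\,\E[\Delta_n(i)]\sim -\tfrac{\alpha^3}{12}\Gamma^3(-\tfrac\alpha2)n^{3/2}$, so by Chebyshev it suffices to show $\var(\Delta_n)=o(n^3)$. I would expand $\var(\Delta_n)=\sum_{T,T'}\mathrm{Cov}(\one_T,\one_{T'})$ over pairs of triangles and classify by the number of shared vertices: disjoint triangles contribute zero; pairs sharing a single vertex are conditionally independent given the weights, and a scaling estimate (dominated by the shared weight ranging up to $\eps^{-1/2}$) gives $\E[\one_T\one_{T'}]=O(\eps^{5\alpha/2})=O(n^{-5/2})$ over $O(n^5)$ pairs, hence $O(n^{5/2})$; pairs sharing an edge contribute $O(n^{2})$ and the diagonal $O(n^{3/2})$. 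Thus $\var(\Delta_n)=O(n^{5/2})=o(n^3)$ and $12\Delta_n/(\alpha^3 n^{3/2})\overset{P}{\longrightarrow}-\Gamma^3(-\tfrac\alpha2)$. The single-vertex statement is approached by the analogous conditional second-moment estimate for $\Delta_n(i)$, and here lies the main obstacle of the theorem: the heavy tails make both $\E[\Delta_n(i)]$ and all covariances dominated by atypically large weights, so the fluctuations are not of the naive tree-like order and the correlations created by shared high-weight vertices must be tracked quantitatively rather than discarded. Controlling the weight of vertex $i$ itself in $\Delta_n(i)$ — whose conditional mean is governed by $W_i^{2\alpha}$, a variable of infinite $\alpha$-th moment — is precisely the delicate point that drives the case analysis above.
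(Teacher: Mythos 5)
For parts (i)--(iii) your argument is correct and in places cleaner than the paper's. The paper computes $\E[\mathbb{W}_n(i)]$ and $\E[\Delta_n(i)]$ by applying the substitution $(A,B,C)=(xy,xz,yz)$ directly on the domain $(1,\infty)^3$ and expanding incomplete Gamma functions; your route for the triangles --- rescale $W_\ell=u_\ell/\sqrt{\eps}$ first so the cutoff at $1$ disappears, verify convergence of $I_3$ on $(0,\infty)^3$, and only then apply the product substitution, which is now a genuine bijection of $(0,\infty)^3$ with the Jacobian you state --- sidesteps the fact that the image of $(1,\infty)^3$ under that substitution is not a product domain, so the paper's ``exact'' factorization is really only asymptotic. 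Your one-dimensional reduction $\E[a_{12}a_{13}]=\E_{W_1}[h(\eps W_1)^2]=\alpha\eps^{\alpha}\int_\eps^\infty h(\lambda)^2\lambda^{-\alpha-1}\,d\lambda$ for the wedges is also correct, the constant $C=\alpha\Gamma^2(-\tfrac{\alpha}{2})$ checks out, and it makes transparent why wedges scale as $\eps^{\alpha}$ (one hub of order $\eps^{-1}$) while triangles scale as $\eps^{3\alpha/2}$. Part (ii) matches the paper, which carries out the $\delta$-sandwich for the tail that you only sketch. One bookkeeping point: with the definition \eqref{Def W_i} one has $\mathbb{W}_n(i)=\tfrac12 D_n(i)(D_n(i)-1)$, as you write, whereas the theorem statement and the paper's proof drop the $\tfrac12$; this only affects the constant in the tail.

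The genuine gap is the second limit in (iv), $12\Delta_n(i)/(\alpha^3 n^{1/2})\overset{\prob}\longrightarrow-\Gamma^3(-\tfrac{\alpha}{2})$, which you do not prove, and the obstruction you half-identify is in fact fatal to the proposed method. Your own computation in (iii) shows that $\E[\Delta_n(i)]$ is carried by the event $\{W_i\gtrsim\eps^{-1/2}=n^{1/(2\alpha)}\}$, of probability $\asymp n^{-1/2}$, on which $\E[\Delta_n(i)\mid W_i]\asymp n$; conditionally on the typical event $\{W_i\le M\}$ one checks $\E\big[(1-e^{-\eps W_iW_2})(1-e^{-\eps W_iW_3})(1-e^{-\eps W_2W_3})\one_{\{W_i\le M\}}\big]=O(M^{2\alpha}\eps^{2\alpha})+O(M^2\eps^{1+\alpha}\log(1/\eps))=o(\eps^{3\alpha/2})$, so that $\E[\Delta_n(i)\one_{\{W_i\le M\}}]=O(M^{2\alpha})$ and $\Delta_n(i)=O_{\prob}(1)$ with probability $1-M^{-\alpha}$. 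No second-moment argument can rescue this: $\E[\Delta_n(i)^2]\ge\E\big[(\E[\Delta_n(i)\mid W_i])^2\big]\gtrsim n^2\cdot n^{-1/2}=n^{3/2}\gg n\asymp\E[\Delta_n(i)]^2$, so $\mathrm{Var}(\Delta_n(i))/\E[\Delta_n(i)]^2\to\infty$; for the same reason one may not bound $\E[\Delta_n(i)D_n(i)]$ by $\E[\Delta_n(i)]\E[D_n(i)]$, as these are positively correlated through $W_i$. To be fair, the paper offers no proof at this point either (``the second one follows from the very same computations''), and the above indicates the single-vertex statement is not merely unproved but false as stated: the heavy tail of $W_i$ prevents self-averaging of $\Delta_n(i)$. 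Your argument for the total count $\Delta_n$ is sound in outline (summing over $i$ averages over the $\asymp n^{1/2}$ heavy vertices), modulo actually verifying convergence of the five-vertex integral behind your $O(\eps^{5\alpha/2})$ estimate for triangle pairs sharing one vertex.
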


\begin{remark}{\bf[Global and local clustering.]}
Let $\mathbb{W}_n=\sum_{i\in [n]}\mathbb{W}_n(i)$ be the total number of wedges. We see from above result that
$$\frac{ \E[ \Delta_n]}{\E[\mathbb{W}_n]} \asymp \eps^{\alpha/2}, \text{ as } \eps \to0.$$
 This shows in a quantitative form that the graph is not highly clustered from the point of view of the \emph{global} count of triangles. In particular, in the  scale of $\eps_n= n^{-1/\alpha}$, the above ratio goes to zero like $n^{-1/2}$. However, this does not mean that the graph is not highly clustered from the point of view of the \emph{local} count of triangles around individual vertices. Indeed, simulations in~\cite{GMD2020} of the average local clustering coefficient suggest that the graph is locally clustered (see also Section~\ref{section: SIM}). A dissimilarity in the behaviour of local and global clustering coefficients has also been observed in different inhomogeneous random graph models, see for example \cite{van2020limit, PhysRevE.95.022307, michielan2022detecting}. We do not consider the local clustering here.  
 \end{remark}


\revise{
\subsection{Connectedness: some first observations}
Connectivity properties of inhomogeneous random graphs were studied in the sparse setting by \cite{bollobas2007phase}.  The connectivity properties when the connection probabilities are of the form $\min\{1, \,\kappa(W_i,W_j)\frac{\log n}{n}\}$ with $\kappa$ being a square integrable kernel was studied in \cite{devroye2014connectivity}. Note that due to dependency of $\eps_n$ in our $p_{ij}$ this do not fall in this setting, as such, connectivity properties of this ensemble would deserve a new detailed analysis, which will be addressed elsewhere. We close this first rigorous work on this ensemble by only pointing out that connectivity properties heavily depend on the considered $\epsilon_n$ regime, as it can be already appreciated by looking at the presence 
of \emph{dust} (i.e. isolated points in the graph ). 
Indeed, the next and last statement shows a cross-over for the absence of dust at the $\eps_n$-scale $(\log n/n)^{1/\alpha}$. 

\begin{proposition}\label{Th 3}{\bf[No dust regime.]}
   Consider the graph $\mathbf G_n(\alpha, \eps)$. Let $N_0$ be the number of isolated vertices, that is, 
\begin{equation}\label{Nisolated}
    N_0 = \sum_{i=1}^n \one _{\{i \; \text{is isolated} \}}.
\end{equation}
Then \[
\E[N_0]\sim n\E\left[e^{-(n-1)\Gamma(1-\alpha)\eps_n^\alpha W_1^{\alpha}}\right].
\]
In particular, if $\eps_n\downarrow 0$ and $\frac{\eps_n^{\alpha} n}{\log n}\rightarrow \infty$, then
\begin{equation}\label{noDust}\prob(N_0=0)\to 1.\end{equation}
If $\eps_n= n^{-1/\alpha}$, then a positive fraction of points are isolated, that is,
\[
\frac{\E[N_0]}{n}\rightarrow \E[e^{-\Gamma(1-\alpha) W_1^\alpha}].
\]

\end{proposition}}

\section{The scale-invariant model (SIM)} \label{section: SIM}
In this section, we discuss the connection between our results and the model introduced in~\cite{GMD2020}.

\subsection{Motivation for the SIM}
The motivation for the SIM arises from statistical physics, where the concept of \emph{renormalization} [\cite{kadanoff2000statistical, wilson1983renormalization}] plays a central role.

In the context of networks, renormalization involves selecting a \emph{coarse-graining} approach for a graph, which essentially means projecting a larger 'microscopic' graph onto a 'reduced' graph with fewer vertices. This reduction is determined by a non-overlapping partition of the vertices of the original microscopic graph into 'clusters' or 'supervertices,' which then become the vertices of the reduced graph. The edges of the reduced graph are defined according to specific rules, usually aimed at preserving certain structural features of the original network. This renormalization process can be iterated, potentially infinitely, in the case of an infinite graph.

For example, when the network is a regular lattice (or geometric graph) embedded in a specific metric space, a straightforward renormalization scheme exists. However, for generic (non-geometric) graphs, the absence of an explicit metric embedding makes the choice of renormalization significantly more challenging. Proposed approaches include borrowing box-covering techniques from fractal analysis~[\cite{song2006origins, radicchi2009renormalization}], employing spectral coarse-graining methods [\cite{gfeller2007spectral, pablo}], and utilizing graph embedding techniques to infer optimal vertex coordinates in an imposed (usually hyperbolic) latent space~[\cite{garcia2018multiscale, boguna2021network}].

Regardless of the method used to find the optimal sequence of coarse-grainings for a given graph, the SIM has been introduced as a random graph model that remains consistently applicable to both the original graph and any reduced versions of it. This implies that, assuming the probability of generating a graph at the microscopic level follows a specific function of the model parameters, the probability of generating any reduced version of the graph should have the same functional form, potentially with renormalized parameters.

The SIM can be explicitly obtained as the model that fulfills the requirement that the random graph ensemble is scale-invariant under a renormalization process that accepts any partition of vertices. Importantly, this renormalization scheme is non-geometric by design, as it doesn't rely on the notion of vertex coordinates in an underlying metric space, unlike the previously mentioned models based on the concept of geometric renormalization, where 'closer' vertices are merged.

\subsection{Construction of the SIM}
The renormalization framework allows for the same random graph ensemble to be observed at different hierarchical levels $\ell=0,1,2,\dots$. 
Let us start with the `microscopic' level $\ell=0$ and consider a random graph on $n_0$ vertices where, adopting the  notation used in~\cite{GMD2020}, each vertex (labeled as $i_0=1,\dots,n_0$) has a weight $X_{i_0}$. 
To move to level $\ell=1$, one specifies a partition of the original $n_0$ vertices into $n_1<n_0$ blocks (which here, for simplicity, we assume to be all equal in size and composed by $b$ vertices, so that $n_1=n_0/b$). 
The blocks of the partition, labeled as $i_1=1,\dots,n_1$, become the vertices of the graph at the new level and each pair of blocks is connected if there existed at least an edge between any two original vertices placed across the two blocks. 
At this new level, the weights of all vertices inside a block $i_1$ get summed to produce the (renormalized) weight for that  block, denoted as $X_{i_1}\equiv \sum_{i_0\in i_1} X_{i_0}$. 
The process can continue to higher levels $\ell>1$ by progressively reducing the graph to one with $n_{\ell+1}=n_{\ell}/b=\dots=n_0/b^{\ell+1}$ vertices and renormalizing the weights as $X_{i_{\ell+1}}\equiv \sum_{i_{\ell}\in i_{\ell+1}} X_{i_{\ell}}$.

To define the SIM, one enforces the requirement that, under the coarse-graining process defined above, the probability distribution of the graph preserves the same functional form across all levels. This scale-invariant requirement  becomes particularly simple if one considers the family of random graph models with independent edges, which are entirely specified by a function $p_{i_\ell j_\ell}$ of the parameters, representing the probability that the vertices $i_\ell$ and $j_\ell$ are connected.
For this family, the connection probability $p_{i_{\ell+1} j_{\ell+1}}$ between two vertices $i_{\ell+1}$ and $j_{\ell+1}$ defined at the level $\ell+1$ is related to the connection probabilities $\{p_{i_\ell j_\ell}\}_{i_\ell,j_\ell}$ between the vertices at the previous level $\ell$ via
\begin{equation}
p_{i_{\ell+1} j_{\ell+1}}=1-\prod_{i_\ell\in i_{\ell+1}}\prod_{j_\ell\in j_{\ell+1}}(1-p_{i_\ell j_\ell}).\label{eq:req}
\end{equation}
Assuming that the connection probability depends on \revise{a global parameter $\delta>0$} besides on the  \emph{additive} vertex weights $\{X_{i_\ell}\}_{i_\ell}$ introduced above, the simplest nontrivial expression consistent with Eq.~\eqref{eq:req} is given by:
\begin{equation} \label{eq:p_ij SIM}
    p_{i_\ell j_\ell} = 1 - \exp\left({-\delta X_{i_\ell} X_{j_\ell}}\right).
\end{equation}

At this point, one may require that the weights are either deterministic parameters assigned to the vertices, so that the only source of randomness lies in the realization of the graph (`quenched' variant of the SIM), or that they are random variables themselves, thus adding a second layer of randomness (`annealed' variant of the SIM). 
In the latter case, it is natural to subject the weights to the same scale-invariant requirement of the random graph, i.e. to demand that the weights are drawn from the same probability density function (with possibly renormalized parameters) at all hierachical levels. 
Since the weights are chosen to be additive upon renormalization, this requirement immediately implies that they must be drawn from an $\alpha$-stable distribution.
Moreover, the positivity of the weights and the concurrent requirement that the support of their pdf is the non-negative real axis, irrespective of the hierarchical level, implies that they should be \emph{one-sided} $\alpha$-stable random variables with (scale-invariant) parameter $\alpha\in(0,1)$ and some (scale-dependent) scale parameter $\gamma_\ell$.
In this way, if the blocks of the partition are always of size $b$ as assumed above, then the vertex weights at level $\ell$ are one-sided $\alpha$-stable random variables with rescaled parameter $\gamma_{\ell} = b^{1/\alpha} \gamma_{\ell-1}=\dots=b^{\ell/\alpha} \gamma_{0}=(n_0/n_\ell)^{1/\alpha}\gamma_0$.
This completes the definition of the annealed SIM, along with its renormalization rules for both the weights of vertices and all the other model parameters.

\subsection{Connection between the SIM and the model studied in this paper} \label{3parag: Connection with the SIM}
Despite the obvious relationship between the model studied in this paper and the original annealed version of the SIM recalled above (in particular, between Eqs.~\eqref{eq:connectionprob0} and~\eqref{eq:p_ij SIM}), there are apparently some differences that require further discussion.
First, here we have considered weights drawn from a Pareto distribution with tail exponent $\alpha$, rather than a one-sided $\alpha$-stable distribution; second, here the other parameters of the weight distribution are fixed and the scale parameter $\eps$ is $n$-dependent, while in the original model the other parameters ($\gamma$) of the weight distribution are $\ell$-dependent (hence also $n$-dependent) and the scale parameter $\delta$ is fixed; third, here we have not exploited the scale-invariant nature of the SIM under coarse-graining. We now clarify the close relationship between the two variants of the model, these apparent differences notwithstanding.

Let us start by recalling that, for large values of the argument $x$, a one-sided $\alpha$-stable distribution $\prob(X>x)$ with scale parameter $\gamma$ is well approximated by a pure power law (Pareto) distribution $\prob(X>x)\sim C_{\alpha,\gamma}~ x^{-\alpha}$ with a prefactor $C_{\alpha,\gamma}$ that depends on the parameters of the stable law [\cite{samorodnitsky1994m}]:
\begin{equation} \label{eq: stable tail}
 C_{\alpha, \gamma} \equiv    \gamma^{\alpha}c_\alpha,
    \quad\text{with}\quad
c_\alpha\equiv\frac{2\Gamma({\alpha})}{\pi}  \sin{\frac{\pi\alpha}{2}}.  
\end{equation}
Then, we note that, besides $n_0$ and $b$, the three remaining parameters of the original annealed SIM are $\alpha\in(0,1)$, $\gamma_0\in(0,\infty)$ and $\delta\in(0,\infty)$. However, of these three parameters, only $\alpha$ and the combination $\delta \gamma_0^2$ are independent.
Indeed, it is easy to realize that rescaling $\gamma_0$ to $\gamma_0/\lambda$ (which is equivalent to rescaling $X_{i_\ell}$ to $X_{i_\ell}/\lambda$, for some $\lambda>0$) while simultaneously  rescaling $\delta$ to $\lambda^{2}\delta$ leaves the connection probability unchanged.
In combination with the scale-invariant nature of the SIM, this property can be exploited to 
map the quantities $(\{X_{i_\ell}\}_{i_\ell=1}^{n_\ell},\delta)$ introduced in the original model to the quantities $(\{W_{i}\}_{i=1}^{n}, \eps)$ used here by choosing a level $\ell$ such that the number $n_\ell$ of vertices in the SIM equals the one desired here, i.e. $n=n_\ell$, and defining the weight of vertex $i$ as $W_{i} \equiv c_{\alpha}^{-1/\alpha}\gamma_\ell^{-1} X_{i_\ell} $ for $i=1,\dots,n$, so that
\begin{equation} \label{eq:asympt W}
    \lim_{x \to \infty}  \prob(W_{i} > x)~ x^{\alpha} = 1 \qquad i=1,\dots,n,
\end{equation} 
irrespective of $\ell$. 
This implies that, while the distribution of $X_{i_\ell}$ depends on $\ell$ through the parameter $\gamma_\ell$ \revise{(see Eq.~\eqref{eq: stable tail})}, the distribution of $W_{i}$ is actually $\ell$-independent in the tail, which is the reason why we could drop the subscript $\ell$ in redefining $W_{i}$. \revise{Note that this procedure yields weights which are only asymptotically  $\ell$-independent, as expressed in Eq.~\eqref{eq:asympt W}.}
Nevertheless, in this way, we can keep the connection probability unchanged (i.e., $1-e^{-\delta X_{i_\ell} X_{j_\ell}} \equiv 1-e^{-\eps_{n_\ell} W_{i} W_{j}}$) while moving the scale-dependence from $\{X_{i_\ell}\}_{i_\ell=1}^{n_\ell}$ to $\eps_{n_\ell}$ by redefining the latter in one of the following equivalent ways:
\begin{equation}
 \eps_{n_\ell} \equiv c_{\alpha}^{2/\alpha} \gamma_{\ell}^{2}\delta   
 = c_{\alpha}^{2/\alpha} \left(\frac{n_0}{n_\ell}\right)^{2/\alpha}
 \gamma_0^2\delta
 = c_{\alpha}^{2/\alpha} b^{2\ell/\alpha}
 \gamma_0^2\delta
=b^{2\ell/\alpha}\eps_{n_0}\quad\text{where}\quad \eps_{n_0} \equiv c_{\alpha}^{2/\alpha} \gamma_{0}^{2}\delta.
\label{chain}
\end{equation}
In other words, our formulation here can be thought of as deriving from an equivalent SIM where, rather than having a scale-dependent fitness distribution and a scale-independent global parameter $\delta$, we have a scale-independent fitness distribution \revise{(with  asymptotically the same tail as the Pareto in Eq.~\eqref{eq:distF})} and a scale-dependent global parameter $\eps_{n}=\eps_{n_\ell}$, for an implied hierarchical level $\ell$.
According to Eq.~\eqref{chain}, since $\delta$, $\alpha$ and $b$ are finite constants, achieving a certain scaling of $\eps_n$ with $n$ in the model considered here corresponds to achieving a corresponding scaling of $\gamma_\ell$ with $n_\ell$ (or equivalently of $\gamma_0$ with $n_0$), or ultimately to finding an appropriate  $\ell$, in the original model. 
Results that we obtain for a specific range of values of $\eps$ can therefore be thought of as applying to a corresponding specific range of hierarchical levels in the original model.\\
In particular, Theorems~\ref{Th1} and~\ref{Th 2} \revise{reveal that at the specific scale $\eps_n = \lambda n^{-1/\alpha}$ the model undergoes drastic structural changes (e.g. concerning typical degrees and wedges).} 

\subsection{Implications of our results for the SIM}
Already in~\cite{GMD2020}, some topological properties of the original SIM were investigated numerically -- and either analytically (for  $\alpha = 1/2$, corresponding to the L\'evy distribution, which is the only one-sided $\alpha$-stable distribution that can be written in explicit form) or semi-analytically (for generic $\alpha\in (0,1)$).
Notably, it was found that networks sampled from the SIM feature an empirical degree distribution $P(k)$ exhibiting  a scale-free region, characterized by a universal power-law decay $\propto k^{-2}$ (corresponding to a cumulative distribution with decay $\propto k^{-1}$) irrespective of the value of $\alpha\in(0,1)$, followed by a density-dependent cut-off. 

The results obtained here provide significant additional insights.
With regard to the degrees, we have identified the specific scaling (or equivalently, as explained in Section~\ref{3parag: Connection with the SIM} above, the specific hierarchical level) for which the density-dependent cut-off disappears and the tail of the cumulative degree distribution can be rigorously proven (through an independent proof) to become a pure power law with exponent $-1$, for any $\alpha \in (0,1)$.
Secondly, we have provided a rigorous evaluation of the overall number of triangles and wedges, valid for any $\alpha$, that supports the outcome of  simulations shown in the original paper, 
which illustrated the vanishing of the global clustering coefficient in the sparse limit 
(as opposed to the local clustering coefficient, which remains bounded away from zero as recalled above).


\section{Proof of Theorem \ref{Th1}: typical degrees}\label{Proof: Th 1}
Since the Karamata's Tauberian theorem is used here as a key tool in the analysis of the degree distribution and later analysis, it is first worth recalling those results. 

\begin{theorem}[{\bf Karamata's Tauberian theorem \cite[Theorem 8.1.6]{bingham1989regular}}]\label{app: Tauberian}
Let $X$ be a non-negative random variable with distribution $F$ and Laplace transform $$\widehat F(s)= \E\left[ e^{-sX}\right], s\ge 0.$$ 
Let $L$ be a slowly varying function and $\alpha\in (0,\, 1)$, then the following are equivalent
\begin{equation}
    \begin{split}
        &\text{(a) } 1- \widehat F(s)\sim \Gamma(1-\alpha) s^{\alpha} L\left(\frac{1}{s}\right),\,  \text{ as $s\downarrow 0$},\\
        &\text{(b) } 1- F(x)\sim x^{-\alpha} L(x), \text{ as $x\to \infty$}.
    \end{split}
\end{equation}
\end{theorem}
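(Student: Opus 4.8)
The statement to be proved is the classical Karamata Tauberian theorem, and the plan is to reduce both implications to the standard Karamata Tauberian theorem for Laplace--Stieltjes transforms of monotone functions, used in tandem with Karamata's integration theorem and the monotone density theorem. The whole argument hangs on a single integration-by-parts identity. Writing $\bar F(x)=1-F(x)=\prob(X>x)$ and starting from $1-\widehat F(s)=\int_0^\infty (1-e^{-sx})\,dF(x)$, an integration by parts (the boundary terms vanish because $1-e^{-s\cdot 0}=0$ and $\bar F(\infty)=0$) gives
\begin{equation}\label{eq:ibp}
1-\widehat F(s)=s\int_0^\infty e^{-sx}\,\bar F(x)\,dx=:s\,\phi(s),
\end{equation}
so that $\phi$ is the Laplace transform of the monotone tail $\bar F$, equivalently the Laplace--Stieltjes transform of the non-decreasing function $G(x):=\int_0^x \bar F(t)\,dt$. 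Once \eqref{eq:ibp} is in place, everything is a matter of moving regular variation back and forth between $\bar F$, $G$, and $\phi$.

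For the Abelian direction (b)$\Rightarrow$(a), I would start from $\bar F(x)\sim x^{-\alpha}L(x)$. Since $-\alpha\in(-1,0)$, Karamata's theorem on integrals of regularly varying functions yields $G(x)\sim \frac{1}{1-\alpha}\,x^{1-\alpha}L(x)$. Writing this in the normalized form $G(x)\sim \frac{x^{\rho}}{\Gamma(1+\rho)}\,\ell(x)$ with $\rho=1-\alpha$ forces $\ell(x)=\Gamma(1-\alpha)L(x)$, using $\Gamma(2-\alpha)=(1-\alpha)\Gamma(1-\alpha)$. The Abelian half of the Karamata Tauberian theorem for Laplace--Stieltjes transforms then gives $\phi(s)\sim s^{-(1-\alpha)}\Gamma(1-\alpha)L(1/s)$, and multiplying by $s$ as in \eqref{eq:ibp} produces exactly $1-\widehat F(s)\sim \Gamma(1-\alpha)\,s^{\alpha}L(1/s)$, which is (a).

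For the converse Tauberian direction (a)$\Rightarrow$(b), I would run the same chain backwards. From (a) and \eqref{eq:ibp} one obtains $\phi(s)\sim \Gamma(1-\alpha)\,s^{-(1-\alpha)}L(1/s)$. Applying the Tauberian half of the Karamata Tauberian theorem — here the monotonicity of $G$ supplies the indispensable Tauberian side condition — yields $G(x)\sim \frac{1}{1-\alpha}\,x^{1-\alpha}L(x)$. Finally, since $\bar F=G'$ is non-increasing, the monotone density theorem transfers the asymptotics from the integral to the integrand and gives $\bar F(x)\sim x^{-\alpha}L(x)$, which is (b).

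The main obstacle is the Tauberian implication invoked in the previous paragraph: the passage from the behaviour of $\phi$ near $0$ back to the behaviour of $G$ at infinity. This is the genuinely deep ingredient; its proof rests on the extended continuity theorem for Laplace transforms combined with Karamata's approximation trick, in which the indicator $\one_{[0,1]}$ is squeezed between polynomials in $e^{-x}$ via Weierstrass approximation, and the monotonicity of $G$ is exactly what controls the resulting oscillation and upgrades vague convergence of the rescaled measures $G(t\,\cdot)/G(t)$ to the required regular variation. By contrast, the Abelian direction and the two transfer steps (Karamata's integration theorem and the monotone density theorem) are comparatively routine once the key identity \eqref{eq:ibp} has been established.
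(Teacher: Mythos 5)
The paper does not prove this statement at all: it is imported verbatim as \cite[Theorem 8.1.6]{bingham1989regular}, so there is no internal proof to compare against, and the right benchmark is the cited reference. Your argument is correct and is essentially the standard derivation found there: the integration-by-parts identity $1-\widehat F(s)=s\int_0^\infty e^{-sx}\,\bar F(x)\,dx$ reduces the equivalence to Karamata's Tauberian theorem for the Laplace--Stieltjes transform of the monotone function $G(x)=\int_0^x \bar F(t)\,dt$, with Karamata's integration theorem and the monotone density theorem (applicable since $\bar F=G'$ is non-increasing and $-\alpha\in(-1,0)$) correctly transferring the regular variation between $\bar F$ and $G$, and the constant bookkeeping via $\Gamma(2-\alpha)=(1-\alpha)\Gamma(1-\alpha)$ checks out.
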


Then,  another property of the tails of products of regularly varying distributions will be needed. A general statement about product of $n$ iid random variables with Pareto tails can be found in \cite[Lemma 4.1 (4)]{AndersHedegaardJessen2006}. For completeness, a proof for two random variables is given here, which is useful in our analysis.

\begin{lemma}\label{lemma:product}
Let $W_1$ and $W_2$ be independent random variables satisfying the tail assumptions \eqref{eq:distF}. Then
\begin{equation}\label{eq: product}
\prob( W_1 W_2\ge x)\sim \alpha x^{-\alpha} \log x, \text{ as $x\to \infty$}.
\end{equation}
\end{lemma}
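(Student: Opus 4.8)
The plan is to compute the tail of the product $W_1 W_2$ directly by conditioning on one variable and integrating against the density of the other. Since both $W_1,W_2$ are Pareto with $\prob(W_i>w)=w^{-\alpha}$ for $w>1$, the density is $f_W(w)=\alpha w^{-\alpha-1}\one_{\{w>1\}}$. Writing
$$
\prob(W_1 W_2 \ge x) = \int_1^\infty \prob\!\left(W_2 \ge \tfrac{x}{w}\right) f_W(w)\, dw,
$$
I would split the integration range at $w=x$, because the conditional tail probability $\prob(W_2\ge x/w)$ equals $(x/w)^{-\alpha}=(w/x)^{\alpha}$ only when $x/w>1$, i.e. $w<x$, and equals $1$ once $w\ge x$ (the argument drops below the lower support point of the Pareto).

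**The main computation** is the contribution from $1\le w< x$, namely
$$
\int_1^x \left(\frac{w}{x}\right)^{\alpha} \alpha w^{-\alpha-1}\, dw = \alpha x^{-\alpha}\int_1^x w^{\alpha}\,w^{-\alpha-1}\,dw = \alpha x^{-\alpha}\int_1^x \frac{dw}{w} = \alpha x^{-\alpha}\log x.
$$
This already produces the claimed asymptotic $\alpha x^{-\alpha}\log x$. I would then check that the remaining piece, the contribution from $w\ge x$, is lower order: there $\prob(W_2\ge x/w)=1$, so this term equals $\int_x^\infty \alpha w^{-\alpha-1}\,dw = x^{-\alpha}$, which is $o(x^{-\alpha}\log x)$ and hence negligible. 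Collecting the two pieces gives $\prob(W_1W_2\ge x)= \alpha x^{-\alpha}\log x + x^{-\alpha}\sim \alpha x^{-\alpha}\log x$ as $x\to\infty$, which is exactly \eqref{eq: product}.

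**The only subtlety** worth flagging is the correct handling of the lower endpoint of the support: because the Pareto is supported on $(1,\infty)$, the simple power-law form of the conditional tail is valid only on $w<x$, and one must not naively extend $\prob(W_2\ge x/w)=(x/w)^{-\alpha}$ to all $w$. Keeping that split is what isolates the logarithmic factor cleanly, and it is the place where an error could creep in. Everything else is an elementary integral. As an alternative cross-check, one could instead verify the result via the Laplace-transform route of Theorem~\ref{app: Tauberian} applied to the slowly varying function $L(x)=\alpha\log x$, but the direct density computation above is shorter and self-contained, so that is the route I would write up.
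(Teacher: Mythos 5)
Your proof is correct, and it takes a different (and in fact more elementary) route than the paper. The paper passes to logarithms: it observes that $\log W_1$ is exponential with rate $\alpha$, so $\log W_1+\log W_2$ is Gamma-distributed, rewrites the tail as $\alpha^2\int_x^\infty \log(t)\,t^{-\alpha-1}\,dt$, and then invokes Karamata's theorem on regularly varying integrals to extract the asymptotic $\alpha x^{-\alpha}\log x$. You instead condition on $W_1$ and evaluate the convolution integral directly, with the correct split at $w=x$ where the conditional tail saturates at $1$; this yields the \emph{exact} identity $\prob(W_1W_2\ge x)=\alpha x^{-\alpha}\log x + x^{-\alpha}$ for $x\ge 1$, from which the asymptotic is immediate. (One can check consistency: integrating the paper's expression by parts gives the same exact formula.) What each approach buys: yours is shorter, self-contained, and even quantifies the error term exactly; the paper's Laplace/Karamata-flavoured route is the one that would survive the generalization the authors mention, namely replacing the pure Pareto tail by $w^{-\alpha}L(w)$ with $L$ slowly varying, where no closed-form integral is available. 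Your handling of the support boundary is exactly the subtlety that matters, and you flag it correctly.
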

\begin{proof}
Consider the random variable $\log(W_1)$ which follows an exponential distribution, or alternatively a Gamma distribution with shape parameter $k=1$ and scale $\theta = 1/\alpha$.
Then, the random variable $Z = \log(W_1) + \log(W_2)$ follows a Gamma distribution with shape parameter $2$ and scale $\theta$. This means:
\begin{equation*}
    \prob(\log(W_1) + \log(W_2) > x) =  \frac{\alpha^2}{\Gamma(2)} \int_{x}^{\infty} y  e^{-\alpha y} dy.
\end{equation*}
Therefore
\begin{equation}
    \prob(W_1 W_2 > x) = \prob(\log{W_1}  + \log{ W_2} > \log{x})=   \alpha^2  \int_{\log{x}}^{\infty} y  e^{-\alpha y} dy =  \alpha^2  \int_{x}^{\infty} \log(t)  t^{-\alpha -1} dt. 
\end{equation}
Then applying Karamata's Theorem (see \cite[Theorem 12]{AndersHedegaardJessen2006})
\begin{equation}
    \prob(W_1 W_2 > x) \sim \alpha^2 \; \frac{x^{-\alpha}\log{x}}{\alpha},
\end{equation}
which proves the statement.

\end{proof}

\begin{remark}\label{remark:product}
The above theorem remains true if $W_1$ and $W_2$ are not exactly Pareto, but asymptotially tail equivalent to a Pareto distribution, that is under the assumption $\prob(W_1>x)\sim cx^{-\alpha}$ as $x\to \infty$. See Lemma 4.1 of \cite{AndersHedegaardJessen2006} for a proof.
\end{remark}

\begin{proof}[{\bf Proof of Theorem \ref{Th1}}]

{\bf (i)} We begin by evaluating the asymptotics of the expected degree, which is an easy consequence of Lemma \ref{lemma:product} and Theorem \ref{app: Tauberian}. Indeed we can write 

\begin{equation}\label{ciao}
    \E\left[ D_n(i)\right] = \sum_{j \neq i}\E \left[ 1-   \exp\left(-\eps W_i W_j\right) \right]= (n-1)\E \left[ 1-   \exp\left(-\eps W_1 W_2\right) \right],
\end{equation}
where the last equality is due to the iid nature of the weights.\\
It follows from Lemma \ref{lemma:product} that $\prob(W_1 W_2> x) \sim \alpha x^{-\alpha} \log{x}$.  Therefore, using Theorem \ref{app: Tauberian} we have
\begin{equation}
    \E\left[1 -  \exp\left(-\epsilon W_1 W_2\right) \right] \sim \Gamma(1-\alpha) \alpha \eps^{\alpha} \log\frac{1}{\eps} \quad\text{ as $\eps\downarrow 0$},
\end{equation}
which together with~\eqref{ciao} gives the claim.\\

{\bf (ii)} 
By following the line of the proof of Theorem 6.14 of \cite{van2016random}, we can prove our statement by showing that the probability generating function of   $D_n(i)$   in the limit $n \to \infty$ corresponds to the probability generating function of a mixed Poisson random variable.
Let $t\in (0,1)$, the probability generating function of the degree $D_n(i)$ reads:
\begin{equation} \label{eq: Degree PGF }
    \E[t^{D_n(i)}] = \E\left[t^{\sum_{j \neq i} a_{ij}}\right] =   \E\left[\prod_{j \neq i} t^{ a_{ij}} \right],
\end{equation}
where $a_{ij}$ are the entries of the adjacency matrix related to the graph $\mathbf G_n(\alpha, \eps)$, \textit{i.e.} Bernoulli random variables with parameter $p_{ij}$ as in~\eqref{eq:connectionprob}. Conditioned on the weights, these variables are independent. Recall that we denoted by $\E_{W_i}[ \cdot]$  the conditional expectation given the weight $W_i$. Then:

\begin{equation} \label{eq: Thm2}
    \begin{split}
     \E_{W_i}[t^{D_n(i)}] &= \E_{W_i}\left[ \prod_{j \neq i} \left( (1-t) e^{-\eps_n W_j W_i } +t  \right)\right] \\
        & =\prod_{j \neq i} \E_{W_i}\left[  (1-t) e^{-\eps_n W_j W_i } +t     \right]  = \prod_{j \neq i}  \E_{W_i}\left[ \varphi_{W_i}(\eps_n W_j)  \right],
    \end{split}
\end{equation}
where we have used the independence of the weights and introduced the function
$$\varphi_{W_{i}}(x) := (1-t)e^{-W_i x} + t .$$
Let us introduce the following notation to simplify our expression:
\begin{equation}
   \psi_n(W_{i}) := \E_{W_i} \left[  \varphi_{W_{i}}(\eps_n W_j)   \right] \text{ for some $j\neq i$}. 
\end{equation}
Using exchangeability, tower property of the conditional expectation, the moment generating function of the $D_n(i)$ can be written as
\begin{equation}\label{ei}
         \E[t^{D_n(i)}] = \E\left[ \prod_{j \neq i}  \E_{W_i} \left[  \varphi_{W_{i}}(\eps_n W_j)   \right] \right] = \E\left[ \psi_n(W_{i})^{n-1} \right]. 
\end{equation}

Consider now a  differentiable function $h:[0,\infty) \to \mathbb{R}$ such that $h(0) = 0$. By integration by parts one can show that
\begin{equation} \label{eq: h}
   \E[h(W_j)] = \int_0^{\infty} h'(x) \prob(W_j >x) dx .
\end{equation}
By using (\ref{eq: h}) with $h(w) = \varphi_{W_i}(\eps_n w) - 1$, we have
\begin{equation}\label{oi}
    \begin{split}
   \psi_n (W_{i}) &= 1 + \E\left[  \varphi_{W_{i}}(\eps_n w)  - 1\right] \\
    &= 1 + \int_0^{\infty} \eps_n \varphi'_{W_{i}}(\eps_n w)  (1 - F_{W}(w)) dw \\
     &= 1 + \int_0^{\infty}  \varphi'_{W_{i}}(y)  (1 - F_{W}(\eps_n^{-1} y)) dy \\
      & =   1 +  \int_0^{\eps_n} \varphi'_{W_{i}}(y)dy+ \int_{\eps_n}^\infty  \varphi'_{W_{i}}(y)   (1 - F_{W}(\eps_n^{-1} y)) dy   \\
      &= 1 + \varphi_{W_i}(\eps_n)-\varphi_{W_i}(0)+ \eps_n^{\alpha}\int_{\eps_n}^\infty (t-1) W_{i} e^{-yW_{i}} y^{-\alpha} dy.
    \end{split}
\end{equation}
In particular for $\eps_n=n^{-1/\alpha}$, combining \eqref{ei} and \eqref{oi} gives

   \begin{align*}
  \E\left[ t^{D_n(i)} \right] &=  \E\left[ \psi_n(W_i)^{n-1} \right]\\
   &= \E\left[ \left(  1 + \varphi_{W_i}(n^{-1/\alpha})-\varphi_{W_i}(0) + \frac{1}{n}\int_{n^{-1/\alpha}}^\infty (t-1) W_{i} e^{-yW_{i}} y^{-\alpha} dy\right)^{n-1} \right].\\
  \end{align*}
  Note that for a fixed realization of  $W_i$, using change of variable $z= W_i y$, we have as $n\to \infty$, $$(1-t)\int_{n^{-1/\alpha}}^\infty  W_{i} e^{-yW_{i}} y^{-\alpha} dy\to (1-t)W_i^\alpha \Gamma(1-\alpha)$$
  and 
  $$\varphi_{W_i}(n^{-1/\alpha})\to \varphi_{W_i}(0)=1.$$

Observe that $\varphi_{W_i}(n^{-1/\alpha})-\varphi(0)= -(1-t)(1-e^{-W_in^{1/\alpha}})\le 0$ and
$$0\le (1-t)\int_{n^{-1/\alpha}}^\infty  W_{i} e^{-yW_{i}} y^{-\alpha} dy\le (1-t) W_i^{\alpha}\Gamma(1-\alpha).$$
Hence using $(1-x/n)^n \le e^{-x}$ we have $$\left(  1 + \varphi_{W_i}(n^{-1/\alpha})-\varphi_{W_i}(0) -(1-t)\frac{1}{n}\int_{n^{-1/\alpha}}^\infty  W_{i} e^{-yW_{i}} y^{-\alpha} dy\right)^{n-1}\le \exp\left(- (1-t)W_i^{\alpha} \Gamma(1-\alpha)\right)\le 1.$$
Thus we can apply the Dominated Convergence Theorem to claim that
$$
     \lim_{n\to \infty}\E\left[ t^{D_n(i)} \right] = \E\left[ \exp\left(-(1-t) \, W_i^{\alpha} \Gamma(1-\alpha)\right)  \right].
$$

So the generating function of the graph degree $D_n(i)$ asymptotically corresponds to the generating function of a mixed Poisson random variable with parameter  $ \Gamma(1-\alpha)  W_i^{\alpha}$ .
Therefore, the variable $D_n(i)\overset{d}\rightarrow D_{\infty}(i)$ where $D_{\infty}(i)\mid W_i\overset{d}= \mathrm{Poisson}\left( \Gamma(1-\alpha) W_i^{\alpha} \right)$. 

In particular, we have the following tail of the distribution of the random variable $D_{\infty}(i)$. 

\begin{equation}
\begin{split}
    \prob(D_{\infty}(i)\ge  k) &= \int_0^{\infty} \prob( \textrm{Poisson}\left(\Gamma(1-\alpha) w^{\alpha}\right) \ge k | W_i = w) \; F_{W_i}(dw)\\
    &= \int_0^{\infty}\sum_{m\ge k} \frac{e^{-\Gamma(1-\alpha) w^\alpha}\Gamma(1-\alpha)^{m}w^{\alpha m} }{m!} \; F_{W_i}(dw) \\
    &= \sum_{m\ge k} \frac{1}{m!} \int_1^{\infty} e^{-\Gamma(1-\alpha) w^\alpha}\Gamma(1-\alpha)^{m}w^{\alpha m} \alpha w^{-\alpha-1} dw.
    \end{split}
\end{equation}

%
Let us introduce the new variable $ y =\Gamma(1-\alpha) w^\alpha$, then

\begin{align}
\int_1^{\infty}  e^{-\Gamma(1-\alpha) w^\alpha}\Gamma(1-\alpha)^{m}w^{\alpha m}\alpha w^{-\alpha-1}dw&=\int_{1}^\infty e^{-\Gamma(1-\alpha) w^\alpha}\Gamma(1-\alpha)^{m-1}w^{\alpha m} w^{-2\alpha} \alpha  \Gamma(1-\alpha) w^{\alpha-1} dw\nonumber\\
&=\Gamma(1-\alpha)\int_{\Gamma(1-\alpha)}^\infty e^{-y} y^{m-2} dy \nonumber\\
&=\Gamma(1-\alpha) \Gamma(m-1)- \Gamma(1-\alpha)\int_0^{\Gamma(1-\alpha)}e^{-y} y^{m-2} dy. \label{eq:gamma1}
\end{align}

The first integral is dominant with respect to the second one.  To show this, we can use a trivial bound:
$$\int_0^{\Gamma(1-\alpha)} e^{-y}y^{m-2}dy \leq \Gamma(1-\alpha)^{m}.$$

Since $m! \ge (m/e)^m$, then the following inequalities hold true
$$\sum_{m\ge k} \frac{\Gamma(1-\alpha)^m}{m!} \le \sum_{m\ge k} \frac{(e\Gamma(1-\alpha))^m}{m^m}\le C(e\Gamma(1-\alpha))^k k^{-k},$$
for some positive constant $C$. Note that in last step we used $k$ is large enough (it is at least greater that $e\Gamma(1-\alpha)$). Note that
$$k\sum_{m\ge k} \frac{\Gamma(1-\alpha)^m}{m!} \le C e^{\log k- k\log k+k e\Gamma(1-\alpha)}\to 0, \text{ as $k\to \infty$}.$$
By using \eqref{eq:gamma1} we therefore obtain
\begin{align*}
\sum_{m\ge k}\frac{1}{m!}\int_1^{\infty}  e^{-\Gamma(1-\alpha) w^\alpha}\Gamma(1-\alpha)^{m}w^{\alpha m}\alpha w^{-\alpha-1}dw &= \Gamma(1-\alpha) \sum_{m\ge k} \frac{\Gamma(m-1)}{m!}+ \o{k^{-1}}\\
&= \Gamma(1-\alpha) \sum_{m\ge k} \frac{ (m-2)!}{m!}+\o{k^{-1}}\\
&= \Gamma(1-\alpha) \sum_{m\ge k} \frac{1}{m(m-1)}+\o{k^{-1}}\sim \frac{\Gamma(1-\alpha)}{k}. \end{align*}
This shows that $\prob( D_\infty(i)\ge k) \sim  \Gamma(1-\alpha) k^{-1}\text{ as $k\to\infty$}$.\\

%
{\bf (iii)}
Fix $t,s\in(0,1)$. Due to the exchangeability of vertices, without loss of generality we consider the vertices $1$ and $2$. 
\begin{equation}\label{eq:joint1}
     \begin{split}
         \E\left[ t^{D_{n}(1)} s^{D_n(2)} \right] &= \E \left[ t^{\sum_{j \neq 1} a_{1j}} s^{\sum_{j \neq 2} a_{2j}} \right] =\E\left[ \prod_{j \neq 1,2} t^{a_{1j}} s^{ a_{2j}} \left(t  s \right)^{a_{12}}\right] \\
         &=  \E\left[ \left( \left(1- t s \right) e^{-\eps W_1 W_2} + t s \right) \prod_{j \neq 1,2}     \left( \left(1- t \right) e^{-\eps W_1 W_j} + t \right) \left( \left(1- s \right) e^{-\eps W_2 W_j} + s \right)  \right] 
     \end{split}
\end{equation}
where we have used the independence of the connection probabilities \textit{given} the weights.
In order to simplify the notation, we can introduce the following functions:
\begin{equation}
\begin{split}
    &\phi_{a}^{b}(x) := \left(1 - b \right) e^{-\eps_n a x} + b,\\
    &\psi_n(W_1,W_2) := \E_{W_1,W_2} \left[  \phi_{W_1}^{t}(W_j) \phi_{W_2}^{s}(W_j)  \right],\,\, \text{ for some $j \neq 1, 2$},
\end{split}
\end{equation}
where $a, b>0$ and, as customary throughout this paper, $\E_{W_1, W_2}[\, \cdot\, ] := \E [\, \cdot \, | W_1,  W_2 ] $.

Using the tower property of conditional expectation,  Eq.~\eqref{eq:joint1} reads
\begin{equation}
     \begin{split}
    \E \left[ t^{D_n(1)} s^{D_n(2)}\right]     &=  \E \left[ \phi_{W_1}^{t s}(W_2) \prod_{j \neq 1,2} \phi_{W_1}^{t}(W_j) \phi_{W_2}^{s}(W_j) \right] \\
     &= \E \left[   \phi_{w_1}^{t s}(w_2)   \; \E_{W_1, W_2} \left[ \prod_{j \neq 1,2}  \phi_{w_1}^{t}(w_j) \phi_{w_2}^{s}(w_j) \right] \right] \\
     &=\E\left[   \phi_{w_1}^{t s}(W_2)  \prod_{j \neq 1,2}\E_{W_1, W_2} \left[  \phi_{W_1}^{t}(W_j) \phi_{W_2}^{s}(W_j) \right] \right]  \\
    &=  \E\left[   \phi_{W_1}^{t s}(W_2)   \psi_n(W_1, W_2)^{n-2} \right],
     \end{split}
\end{equation}
where we used conditional independence in the second last step and exchangeability in the last step. The function $\psi_n$ can be processed as follows. Just as in the one dimensional case,  using $\eps_n=n^{-1/\alpha}$,  we get $\prob$ a.s., 

\begin{equation}\label{eq:seconddct}
\begin{split}
      \psi_n(W_1, W_2)-1&= \E_{W_1, W_2} \left[  \phi_{W_1}^{t}(W_3) \phi_{W_2}^{s}(W_3) - 1 \right] \\
      &\to -\Gamma(1- \alpha) \left[ (1-t)(1-s)(W_1+W_2)^{\alpha} + (1-t) s W_1^{\alpha} +  t(1-s) W_2^{\alpha} \right]\\
      &= - \Gamma(1- \alpha) \Big{\{} (1-t)(1-s)\left[(W_1+W_2)^{\alpha} -W_1^{\alpha} - W_2^{\alpha}\right]  +(1-t) W_1^{\alpha} +  (1-s) W_2^{\alpha} \Big{\}}
\end{split}
\end{equation}
where in the second step we used (\ref{eq: h}) with $h(x) := \phi_{W_1}^{t}(x) \phi_{W_2}^{s}(x) -1$. Observe that $\phi_{w_1}^{t s}(W_2)\le 1$ and $1-\psi_n(W_1, W_2)\ge 0$ and hence we can use Dominated convergence theorem as in the single vertex case. Therefore using $\phi_{W_1}^{t s}(W_2)\to 1$ and \eqref{eq:seconddct} we get

\begin{equation}
\begin{split}
     \E\left[ t^{D_\infty(1)} s^{D_\infty(2)} \right]&= \lim_{n \to \infty}   \E \left[ t^{D_n(1)} s^{D_n(2)}\right]  \\  
    &=  \E \Bigg[ e^{- \Gamma(1- \alpha)  \Big{\{} (1-t)(1-s)\left[(W_1+W_2)^{\alpha} -W_1^{\alpha} - W_2^{\alpha}\right]  +(1-t) W_1^{\alpha} +  (1-s) W_2^{\alpha}  \Big{\}} }\Bigg]\\
    &= \E \left[ e^{- \Gamma(1- \alpha)   (1-t)(1-s) \left[(W_1+W_2)^{\alpha} -W_1^{\alpha} - W_2^{\alpha}\right]   } e^{- \Gamma(1- \alpha)    (1-t) W_1^{\alpha}  } e^{- \Gamma(1- \alpha)    (1-s) W_2^{\alpha}    } \right].
\end{split}
\end{equation}

It is straightforward to note that in the limit $t \to 1$ and for fixed $s\in (0,1)$ we recover the correct moment generating function of $D_\infty(1)$ and the inverse holds true as well. Finally, since  $(W_1+W_2)^{\alpha}\neq W_1^{\alpha} + W_2^{\alpha}$ $\prob$-a.s.,  then \eqref{NonZero}  follows.\\

We next move to the proof of \eqref{Zero}, for which we abbreviate  $\eta = 1-t$, $\gamma=1-s$ and show that
$$\lim_{\substack{ \eta\to 0\\ \gamma \to 0}}\Big{\lvert}  \E\left[ (1-\eta)^{D_\infty(1)} (1-\gamma)^{D_\infty(2)}  \right] -  \E\left[ (1-\eta)^{D_\infty(1)}\right]\E \left[(1-\gamma)^{D_{\infty}(2)}   \right]\Big{\rvert}=0.$$
\begin{equation}
\begin{split}
     \Big{\lvert} & \; \E\left[ (1-\eta)^{D_\infty(1)} (1-\gamma)^{D_\infty(2)}  \right] -  \E \left[ (1-\eta)^{D_\infty(1)}\right] \E \left[(1-\gamma)^{D_\infty(1)}   \right]   \; \Big{\rvert} \\
    & =  \; \Big{\lvert}\;  \mathbf{E} \left[    e^{-   \Gamma(1-\alpha) \eta \gamma  \left[ (w_1 + w_2)^{\alpha} - w_1^{\alpha} - w_2^{\alpha} \right]}   e^{-   \Gamma(1-\alpha) \eta w_1^{\alpha}} e^{-    \Gamma(1-\alpha) \gamma w_2^{\alpha}}   \right]  -  \mathbf{E} \left[ e^{-  \;  \Gamma(1-\alpha) \eta w_1^{\alpha}} \right] \mathbf{E} \left[ e^{-  \;  \Gamma(1-\alpha) \gamma w_2^{\alpha}}   \right]  \;  \Big{\rvert} \\ 
    &=  \Big{\lvert}\; \mathbf{E} \left[ \left( e^{-  \Gamma(1-\alpha) \eta\gamma  \left[ (w_1 + w_2)^{\alpha} - w_1^{\alpha} - w_2^{\alpha} \right]} -1  \right)   e^{-  \Gamma(1-\alpha) \eta w_1^{\alpha}} e^{- \Gamma(1-\alpha) \gamma w_2^{\alpha}}   \right] \;\Big{\rvert}\\
    &=  \Bigg{\lvert}\; \int^{\infty}_{1} \left( \sum_{k\geq 1} \frac{\left(   \Gamma(1-\alpha) \eta \gamma \right)^k}{k!} \left[ -(x+y)^{\alpha} + x^{\alpha} + y^{\alpha} \right]^k \right)   e^{-   \Gamma(1-\alpha) \eta x^{\alpha}} e^{-   \Gamma(1-\alpha) \gamma y^{\alpha}}   \alpha^2(xy)^{-\alpha -1} dx dy \;\Bigg{\rvert} \\
    &\leq \int^{\infty}_{1}   \sum_{k = 1}^{\infty} \frac{\left(   \Gamma(1-\alpha) \eta \gamma \right)^k}{k!} \; \Big{\lvert}\, -(x+y)^{\alpha} + x^{\alpha} + y^{\alpha}  \,\Big{\rvert}^k     e^{-    \Gamma(1-\alpha) \eta x^{\alpha}} e^{-   \Gamma(1-\alpha) \gamma y^{\alpha}} \alpha ^2 (xy)^{-\alpha -1} dx dy.
\end{split}
\end{equation}

Now, since $ (x+y)^{\alpha} \leq (x^{\alpha} + y^{\alpha}) \quad \forall \; \alpha \in (0,1)$ , we get

\begin{equation} \label{eq:positivity}
   \Big{\lvert}\,  x^{\alpha} + y^{\alpha} -(x+y)^{\alpha}  \,\Big{\rvert}^k  \leq \left(  x^{\alpha} + y^{\alpha}  \right)^k \leq 2^{k-1} \left(
   x^{\alpha k} + y^{\alpha k} \right).
\end{equation}

Therefore, using Fubini we can bring the summation out of the integral and using the inequality (\ref{eq:positivity}), we are left with the following quantity (which we will show to be converging to zero):

\begin{equation}
    \alpha ^2 \sum_{k = 1}^{\infty} \frac{\left(  \Gamma(1-\alpha) \eta\gamma \right)^k}{k!} 2^{k-1} \int^{\infty}_{1}   \left( x^{\alpha k} + y^{\alpha k}  \right)      e^{-  \Gamma(1-\alpha) \eta x^{\alpha}} e^{-    \Gamma(1-\alpha) \gamma y^{\alpha}}  (xy)^{-\alpha -1} dx dy.
\end{equation}

Let us consider the different terms of the sum separately. 
In the following we will consider the exponential integral $E_1(x)= \int_1^\infty \frac{e^{-tx}}{t} dt$ and the related inequality $E_1(x) < e^{-x} \ln (1+\frac{1}{x})$ for any $x>0$.

\noindent{\bf Case 1: $k=1$}
\begin{align*}
&\alpha^2  \Gamma(1-\alpha) \eta \gamma   \int^{\infty}_{1} \int^{\infty}_{1}  \left( x^{\alpha} + y^{\alpha}  \right)      e^{-   \Gamma(1-\alpha) \eta x^{\alpha}} e^{-    \Gamma(1-\alpha) \gamma y^{\alpha}}  (xy)^{-\alpha -1} dx dy  \\
    = &  \,  \Gamma(1-\alpha) \eta \gamma   \left[     E_1\left(  \Gamma(1-\alpha) \eta   \right) \int^{\infty}_{1} \frac{e^{-  \Gamma(1-\alpha) \gamma   z }}{z^2} dz  + E_1\left(    \Gamma(1-\alpha)   \gamma  \right) \int^{\infty}_{1} \frac{e^{-   \Gamma(1-\alpha)   \eta   z }}{z^2} dz   \right]\\ 
     \leq & \,   \Gamma(1-\alpha) \eta \gamma   \left[     E_1\left( \Gamma(1-\alpha)   \eta  \right)  +   E_1\left(   \Gamma(1-\alpha) \gamma  \right)  \right]\\ 
     < &    \Gamma(1-\alpha)   \left[  \eta \gamma     e^{-  \Gamma(1-\alpha) \, \eta} \log{\left(1+\frac{1}{ \Gamma(1-\alpha)  \, \eta}\right)}  + \eta \gamma \; e^{-  \Gamma(1-\alpha)  \, \gamma} \log{\left(1+\frac{1}{ \Gamma(1-\alpha)  \, \gamma}\right)} \right]
     \end{align*}\\
     
{\bf  Case 2: $k=2$}
     \begin{align*}
&\left(\alpha  \;  \Gamma(1-\alpha) \eta \gamma \right)^2   \int^{\infty}_{1} \int^{\infty}_{1}  \left( x^{2\alpha } + y^{2\alpha }  \right)      e^{-   \;  \Gamma(1-\alpha) \eta x^{\alpha}} e^{-   \;  \Gamma(1-\alpha) \gamma y^{\alpha}}  (xy)^{-\alpha -1} dx dy \\
= &\Gamma(1-\alpha)   \left(\eta \gamma \right)^2  \left[  \frac{e^{-   \Gamma(1-\alpha)  \eta}}{   \eta} \int^{\infty}_{1} \frac{e^{-   \Gamma(1-\alpha)   \gamma   z }}{z^2} dz   + \int^{\infty}_{1} \frac{e^{-   \Gamma(1-\alpha)  \eta   z }}{z^2} dz \;  \frac{e^{-   \Gamma(1-\alpha)  \gamma}}{   \gamma}  \right]\\
 \leq & \Gamma(1-\alpha)    \left( \eta \gamma \right)^2  \left[  \frac{e^{-   \Gamma(1-\alpha)   \eta}}{  \eta} +  \frac{e^{-   \Gamma(1-\alpha)  \gamma}}{   \gamma}  \right] 
\end{align*}\\

{\bf Case 3: $k>2$}        
     \begin{align*}
& \alpha^2   \frac{\left(   \Gamma(1-\alpha) \eta\gamma \right)^k}{k!} 2^{k-1} \int^{\infty}_{1}   \left( x^{\alpha k} + y^{\alpha k}  \right)      e^{-    \Gamma(1-\alpha) \eta x^{\alpha}} e^{-   \Gamma(1-\alpha) \gamma y^{\alpha}}  (xy)^{-\alpha -1} dx dy\\
    =&  \,\alpha^2   \frac{\left(   \Gamma(1-\alpha) \eta\gamma \right)^k}{k!} 2^{k-1}   \left[     \int^{\infty}_{1} \frac{e^{-  \Gamma(1-\alpha) \eta x^{\alpha} }}{x^{\alpha(1-k)+1}} dx  \int^{\infty}_{1} \frac{e^{-   \Gamma(1-\alpha) \gamma y^{\alpha} }}{y^{\alpha + 1}} dy \right. \\
    &  \left. \qquad \qquad \qquad \quad \quad \quad \quad + \int^{\infty}_{1} \frac{e^{-  \Gamma(1-\alpha) \eta x^{\alpha} }}{x^{\alpha + 1}} dx    \int^{\infty}_{1} \frac{e^{-  \Gamma(1-\alpha) \gamma y^{\alpha} }}{y^{\alpha(1-k)+1}} dy  \right] \\
     =& \,  \alpha  ^2   \frac{\left(    \Gamma(1-\alpha) \eta\gamma \right)^k}{k!} 2^{k-1}   \left[     \int^{\infty}_{ \Gamma(1-\alpha)  \eta } \left(\frac{z}{\Gamma(1-\alpha) \eta } \right)^{k-2} e^{- z }\frac{dz}{\alpha  \Gamma(1-\alpha)   \eta}  \int^{\infty}_{1} \frac{e^{- ;  \Gamma(1-\alpha) \gamma z }}{z^2} dz \frac{1}{\alpha}\right. \\
    &  \left. \qquad \qquad \qquad \quad \quad \quad \quad + \int^{\infty}_{1} \frac{e^{-  \Gamma(1-\alpha) \eta z }}{z^2} dz \frac{1}{\alpha}    \int^{\infty}_{\Gamma(1-\alpha)   \gamma } \left(\frac{z}{ \Gamma(1-\alpha) \gamma } \right)^{k-2} e^{- z }\frac{dz}{\alpha \Gamma(1-\alpha)  \gamma}   \right] \\
    \leq &\, \alpha ^2   \frac{\left(    \Gamma(1-\alpha) \eta\gamma \right)^k}{k!} 2^{k-1}   \left[ \left(\frac{1}{\Gamma(1-\alpha) \eta } \right)^{k-1} \frac{1}{\alpha^2}  \int^{\infty}_{ \Gamma(1-\alpha) \eta } z^{k-2} e^{- z } dz   \right. \\
    &  \left. \qquad \qquad \qquad \quad \quad \quad \quad +   \left(\frac{1}{ \Gamma(1-\alpha) \gamma } \right)^{k-1} \frac{1}{\alpha^2}    \int^{\infty}_{ \Gamma(1-\alpha) \gamma } z^{k-2} e^{- z }dz   \right] \\
     \leq &\,    \frac{    \Gamma(1-\alpha)}{k!} 2^{k-1}    \Gamma(k-1) \left(\eta\gamma \right)^k \left[ \left(\frac{1}{ \eta } \right)^{k-1}      +   \left(\frac{1}{\gamma } \right)^{k-1}        \right] \\
     = & \,   \Gamma(1-\alpha) \frac{2^{k-1} }{k(k-1)}  \left( \eta \gamma^k       +   \eta^k \gamma        \right) 
\end{align*}\\

So, combining together all the bounds, we have
\begin{equation}
    \begin{split}
    & \Big{\lvert} \; \E  \left[ (1-\eta)^{D_{\infty}(1)}(1-\gamma)^{D_\infty(2)}  \right] -  \mathbf{E} \left[ (1-\eta)^{D_\infty(1)}\right] \mathbf{E} \left[(1-\gamma)^{D_{\infty}(2) } \right]   \; \Big{\rvert} \\
        & <   \Gamma(1-\alpha)   \left[    \eta \gamma \, \log{\left(1+\frac{1}{\Gamma(1-\alpha)  \, \eta}\right)}  + \eta \gamma \, \log{\left(1+\frac{1}{\Gamma(1-\alpha) \, \gamma}\right)} \right. \\
        &\qquad \qquad \left.+  \frac{1}{2}\sum_{k=2}^{\infty} \frac{2^{k}}{k(k-1)}    (\eta \gamma^k       +   \eta^k \gamma)         \right].\\
    \end{split}
\end{equation}
Since $x\log(1+\frac{1}{x})\to 0$ as $x\to 0$,  the above quantity goes to $0$ as $\eta, \gamma\to 0$. This completes the proof of Theorem \ref{Th1}.
\end{proof}

\section{Proof of Theorem \ref{Th 2}: wedges \& triangles}\label{Proof: Th 2}
\begin{proof}[{\bf Proof of Theorem \ref{Th 2}}]
\revise{
{\bf (i)} Start from the equality
\begin{equation}
\begin{split}
    2 \mathbf{E}\left[\mathbb{W}_n(i)\right] &= \mathbf{E}\left[ \sum_{j \neq i} \sum_{k \neq i,j} a_{ij} a_{ik}  \right] \\
    &=  (n-1) (n-2) \alpha^3 \int_1^{\infty}  \int_1^{\infty}  \int_1^{\infty}  \frac{(1-e^{-\eps x y}) (1-e^{-\eps x z}) }{(xyz)^{\alpha +1}} dx dy dz.
\end{split}
\end{equation}
We split the terms into two integrals by restricting the $x$ variable to take values between $(0,\frac1\eps)$ and $(\frac1\eps,\infty)$ and write $2 \mathbf{E}\left[\mathbb{W}_n(i)\right]=(n-1)(n-2)(A+B)$, where
\begin{equation}\label{eq:A}
A= \alpha^3 \int_1^{1/\eps} \int_1^\infty \int_1^\infty \frac{(1-e^{-\eps x y}) (1-e^{-\eps x z}) }{(xyz)^{\alpha +1}} dydz dx
\end{equation}
and 
\begin{equation}\label{eq:B}
   B= \alpha^3 \int_{1/\eps}^\infty \int_1^\infty \int_1^\infty \frac{(1-e^{-\eps x y}) (1-e^{-\eps x z}) }{(xyz)^{\alpha +1}} dydz dx 
\end{equation}
We first provide a bound for $B$. Note that $x\ge 1/\eps$ and $y\ge 1$ we have $1-e^{-1}\le 1-e^{-\eps xy}\le 1$. 
Hence
\begin{equation}
    \begin{split}
        B&= \alpha^3\int_{1/\eps}^\infty \frac{1}{x^{\alpha+1}}\left( \int_1^\infty \frac{(1-e^{-\eps xy})}{y^{\alpha+1}} dy\right)^2\\
        &\le \alpha^3 \int_{1/\eps}^\infty \frac{dx}{x^{\alpha+1}}\left(\int_1^\infty \frac{dy}{y^{\alpha+1}}\right)^2=\eps^\alpha.
    \end{split}
\end{equation}

A lower bound for $B$ can be obtained similarly: 
\[ 
B \geq \alpha^3\left(1-e^{-1}\right)^2 \int_{\frac{1}{\varepsilon}}^{\infty} \frac{1}{x^{\alpha+1}}\left(\int_1^\infty \frac{dy}{y^{\alpha+1}}\right)^2 d x=\left(1-e^{-1}\right)^2 \varepsilon^\alpha
\]
So we have proved that $B\asymp \eps^\alpha$. Now we bound the term $A$ in \eqref{eq:A}.

\[ 
\begin{aligned} A & =\alpha^3 \int_1^{\frac{1}{\varepsilon}} \frac{1}{x^{\alpha+1}}\left(\int_1^{\infty} \frac{1-e^{-\varepsilon x y}}{y^{\alpha+1}} d y\right)^2 d x \\ & =\alpha^3 \int_1^{\frac{1}{\varepsilon}} \frac{1}{x^{\alpha+1}}\left[\frac{1}{\alpha}-(\varepsilon x)^\alpha \Gamma(-\alpha, \varepsilon x)\right]^2 d x\end{aligned}
\]
where $\Gamma \left(-s;\eps \right)$ is the incomplete Gamma function. When $\eps$ is small, the following expansion (\cite{bender1999advanced}) can be used:
\begin{equation} \label{eq: Gamma incomplete expansion}
    \Gamma(s; \eps) = \Gamma(s) - \sum_{k=0}^{\infty}(-1)^k \frac{\eps^{s+k}}{k! (s+k)},  \text{ and } s \neq 0, -1, -2, -3, \dots
\end{equation}

\[
\begin{aligned} A & =\alpha^3 \int_1^{\frac{1}{\varepsilon}} \frac{1}{x^{\alpha+1}}\left[-(\varepsilon x)^\alpha \Gamma(-\alpha)+\sum_{n=1}^{\infty} \frac{(-1)^n}{n!(n-\alpha)}(\varepsilon x)^n\right]^2 d x \\ & =\alpha^3 \int_1^{\frac{1}{\varepsilon}} \frac{1}{x^{\alpha+1}}\left[\Gamma^2(-\alpha)(\varepsilon x)^{2 \alpha}-2 \Gamma(-\alpha) \sum_{n=1}^{\infty} \frac{(-1)^n}{n!(n-\alpha)}(\varepsilon x)^{n+\alpha}+\left(\sum_{n=1}^{\infty} \frac{(-1)^n}{n!(n-\alpha)}(\varepsilon x)^n\right)^2\right] d x \\ & =\alpha^3 \Gamma^2(-\alpha) \varepsilon^{2 \alpha}\left[\frac{x^\alpha}{\alpha}\right]_1^{\frac{1}{\varepsilon}}-2 \alpha^3 \Gamma(-\alpha) \sum_{n=1}^{\infty} \frac{(-1)^n}{n!(n-\alpha)} \varepsilon^{n+\alpha}\left[\frac{x^n}{n}\right]_1^{\frac{1}{\varepsilon}}+C(\eps) \\ & =\alpha^2 \Gamma^2(-\alpha)\left(\varepsilon^\alpha-\varepsilon^{2 \alpha}\right)-2 \alpha^3 \Gamma(-\alpha) \sum_{n=1}^{\infty} \frac{(-1)^n}{n \cdot n!(n-\alpha)}\left(\varepsilon^\alpha-\varepsilon^{n+\alpha}\right)+C(\eps) \\ & =\mathrm{O}\left(\varepsilon^\alpha\right)+C(\eps) \quad \text { as } \varepsilon \rightarrow 0+,\end{aligned}
\]

where we have introduced

$$
\begin{aligned}
C(\eps)& =\alpha^3 \int_1^{\frac{1}{\varepsilon}} \frac{1}{x^{\alpha+1}}\left(\sum_{n=1}^{\infty} \frac{(-1)^n}{n!(n-\alpha)}(\varepsilon x)^n\right)^2 d x \\
& =\sum_{n=1}^{\infty} d_{n, \alpha} \varepsilon^{2 n}\left[x^{2 n-\alpha}\right]_1^{\frac{1}{\varepsilon}}+\sum_{\substack{n, m=1 \\
m \neq n}}^{\infty} d_{n, m, \alpha} \varepsilon^{n+m}\left[x^{n+m-\alpha}\right]_1^{\frac{1}{\varepsilon}} \\
& =\sum_{n=1}^{\infty} d_{n, \alpha}\left(\varepsilon^\alpha-\varepsilon^{2 n}\right)+\sum_{\substack{n, m=1 \\
m \neq n}}^{\infty} d_{n, m, \alpha}\left(\varepsilon^\alpha-\varepsilon^{n+m}\right) \\
& =\mathrm{O}\left(\varepsilon^\alpha\right)
\end{aligned}
$$
with $d_{n, \alpha}$ and $d_{n, m, \alpha}$ suitable constant dependent on $n, m$ and $\alpha$. So we can deduce that $A+B=\mathrm{O}\left(\varepsilon^\alpha\right)$. Hence we have shown that $\mathbf{E}\left[\mathbb{W}_n(i)\right] \asymp n^2 \eps^\alpha$ as desired.
}

{\bf (ii)} Assume now that $\eps_n= n^{-1/\alpha}$. From Theorem \ref{Th1} we know that $D_n(i)\overset{d}\longrightarrow D_{\infty}(i)$. Using the continuous mapping $x\mapsto x(x-1)$ we have, by the Continuous Mapping Theorem, convergence in distribution of the number of wedges $\mathbb{W}_n(i)$:
\begin{equation}
    \mathbb{W}_n(i) = D_n(i)(D_n(i) -1) \overset{d}\longrightarrow D_{\infty}(i)(D_{\infty}(i) -1) \equiv \mathbb{W}_{\infty}(i).
\end{equation}

We then show that the tail satisfies $$\prob(\mathbb{W}_\infty(i) > x)\sim \Gamma(1-\alpha)  x^{-1/2} \text{ as $x\to\infty$}$$
by  bounding the ratio $$\frac{\prob(D_{\infty}(i)^2 -D_{\infty}(i) > x)}{\prob(D_{\infty}(i)^2 > x)},$$
where, by Eq.\eqref{eq:taildegree},
\begin{equation}
    \prob(D_{\infty}(i)^2 > x)  \sim \Gamma(1-\alpha) x^{-1/2}.
\end{equation}
Let $\delta>0$, then 
\begin{align}
    \prob(D_{\infty}(i)^2 - D_\infty(i) > x) &= \prob(D_\infty(i)^2> x+ D_\infty(i), D_\infty(i) > x + \delta) \nonumber\\
    &\qquad \qquad +  \prob(D_{\infty}(i)^2> x + D_{\infty}(i), D_{\infty}(i) \leq x + \delta) \nonumber\\
    & \leq \prob( D_{\infty}(i) > x+\delta) +  \prob(D_{\infty}(i)^2> x).\label{eq: ub for W}
    \end{align}
This implies that, for any $\delta>0$:
\begin{align*}
 \frac{\prob( \mathbb{W}_\infty (i)>x)}{\prob(D_{\infty}(i)^2>x)} &\leq  \frac{ \prob(D_{\infty}(i) > x +  \delta)}{ \prob(D_{\infty}(i)^2 > x)} + 1 \sim \frac{\Gamma(1-\alpha)(x + \delta)^{-1}}{\Gamma(1-\alpha)x^{-1/2}} + 1   \xrightarrow[x \to \infty]{} 1.
\end{align*}
A similar break up yields the  lower bound:
\begin{align}
    \prob(D_{\infty}(i)^2 - D_{\infty}(i) > x) &\geq \prob(D_{\infty}(i)^2 > (1+\delta) x , D_{\infty}(i) \leq \delta x) \nonumber\\
      & \geq \prob(D_{\infty}(i)^2 > (1+\delta)x ) - \prob(D_{\infty}(i) > \delta x)\label{eq: lb for W}
\end{align}
Which yields:
\begin{equation*}
\begin{split}
    \frac{\prob(D_{\infty}(i)^2 -D_{\infty}(i) > x)}{\prob(D_{\infty}(i)^2 > x)} &\geq \frac{\left((1+\delta)x \right)^{-1/2}-  (\delta x)^{-1}}{ x^{-1/2}}  \xrightarrow[x\to\infty]{} \frac{1}{\sqrt{1+\delta}}.
    \end{split}
\end{equation*}
Hence:
\begin{align*}
   &\limsup_{x\to \infty}\frac{\prob( \mathbb{W}_\infty (i)>x)}{\Gamma(1-\alpha) x^{-1/2}} \le 1; \\
   &\liminf_{x\to \infty} \frac{\prob( \mathbb{W}_\infty (i)>x)}{\Gamma(1-\alpha) x^{-1/2}}\ge \frac{1}{\sqrt{(1+\delta)}}.
\end{align*}
The result follows by taking $\delta\to 0$.\\ \\ \\

{\bf (iii)} We will here focus on the average number of triangles, whose evaluation will require integral asymptotics similar to the ones used for the wedges.

\begin{equation}
\begin{split}
    6 \, \mathbf{E}\left[\Delta_n(i)\right] &= \mathbf{E}\left[ \sum_{j \neq i} \sum_{k \neq i,j} a_{ij} a_{ik}a_{jk}  \right] =  \sum_{j \neq i} \sum_{k \neq i,j} \E\left[  p_{ij} p_{ik}  p_{jk}  \right] \\
    &= (n-1) (n-2) \alpha^3  \int_1^{\infty} \int_1^{\infty}  \int_1^{\infty}  \frac{(1-e^{-\eps x y}) (1-e^{-\eps x z})(1-e^{-\eps yz}) }{(xyz)^{\alpha +1}}\, dx\, dy \, dz.
\end{split}
\end{equation}
\revise{
First, we provide an upper bound for the above integral. To do this, we use the variables \( A = xy \), \( B = xz \), and \( C = yz \). Note that by setting \( x = \sqrt{\frac{AC}{B}} \), \( y = \sqrt{\frac{AB}{C}} \), and \( z = \sqrt{\frac{BC}{A}} \), and considering that the Jacobian of this transformation is \( \frac{1}{4} (ABC)^{-1/2} \), we have
\[
\begin{split}
& \alpha^3  \int_1^{\infty} \int_1^{\infty}  \int_1^{\infty}  \frac{(1-e^{-\eps x y}) (1-e^{-\eps x z})(1-e^{-\eps yz}) }{(xyz)^{\alpha +1}}\, dx\, dy \, dz\\
 &= \alpha^3 \int_1^{\infty} \int_1^{\infty} \int_{\frac{A}{B}\vee \frac{B}{A}}^{AB} \frac{(1-e^{-\eps A}) (1-e^{-\eps B})(1-e^{-\eps C}) }{(\sqrt{ABC})^{\alpha +1}} \frac{1}{4\sqrt{ABC}}dC dB dA\\
 &=\frac{\alpha^3}{4} \int_1^{\infty} \int_1^{\infty} \int_{\frac{A}{B}\vee \frac{B}{A}}^{AB} \frac{(1-e^{-\eps A}) (1-e^{-\eps B})(1-e^{-\eps C}) }{(ABC)^{\frac{\alpha}{2} +1}} dC dB dA
\end{split}
\]
Observing that for $A, B\ge 1$, the interval $\left(\frac{A}{B}\vee \frac{B}{A}, \,\, AB\right)$ is contained in $[1,\infty)$. So the last integral can be upper bounded by

\[
\begin{split}
&\frac{\alpha^3}{4} \int_1^{\infty} \int_1^{\infty} \int_{\frac{A}{B}\vee \frac{B}{A}}^{AB} \frac{(1-e^{-\eps A}) (1-e^{-\eps B})(1-e^{-\eps C}) }{(ABC)^{\frac{\alpha}{2} +1}} dC dB dA\\
&\le \frac{\alpha^3}{4} \int_1^{\infty} \int_1^{\infty} \int_1^\infty \frac{(1-e^{-\eps A}) (1-e^{-\eps B})(1-e^{-\eps C}) }{(ABC)^{\frac{\alpha}{2} +1}} dC dB dA \\
&=\frac{\alpha^3}{4}\left(\int_1^\infty \frac{1-e^{-\eps A}}{A^{\frac{\alpha}{2}+1}}dA
\right)^3=\frac{\alpha^3}{4}\left[\frac{2}{\alpha} - \eps^{\alpha/2} \Gamma \left(-\frac{\alpha}{2};\eps \right)   \right]^3\\
&= \left[\frac{2}{\alpha} - \eps^{\alpha/2} \Gamma \left(-\frac{\alpha}{2};\eps \right)   \right]^3 \\
        & = \frac{\alpha^3}{4}    \left[  -\eps^{\alpha/2} \Gamma\left(-\frac{\alpha}{2}\right)+\O{\eps} \right]^3  = - \frac{n^2 \alpha^3}{2}    \eps^{\frac{3\alpha}{2}} \Gamma\left(-\frac{\alpha}{2} \right) ^3+ \O{n^2 \eps^3}
\end{split}
\]
where in the last step we used the expansion approximating the incomplete Gamma function (\ref{eq: Gamma incomplete expansion}). By our assumption, since $\alpha<2$ and $\eps\to 0$, we have $\eps^3=\o{\eps^{\frac{3\alpha}{2}}}$ and hence we get that
$$\mathbf{E}\left[\Delta_n(i)\right]  =\O{n^2\eps^{\frac{3\alpha}{2}}}.$$

To complete the proof, we now show a lower bound that matches the correct order. Specifically, we provide a lower bound for the following integral:
\[
I= \int_1^{\infty} \int_1^{\infty}  \int_1^{\infty}  \frac{(1-e^{-\eps xy}) (1-e^{-\eps xz})(1-e^{-\eps yz}) }{(xyz)^{\alpha +1}}\, dx\, dy \, dz.
\]
First, we perform a change of variables with \( x= u/\sqrt{\eps} \), \( y= v/\sqrt{\eps} \), and \( z= w/\sqrt{\eps} \). Then we obtain
\[
I = \eps^{\frac{3\alpha}{2}} \int_{\sqrt{\eps}}^\infty \int_{\sqrt{\eps}}^\infty \int_{\sqrt{\eps}}^\infty \frac{(1-e^{-uv}) (1-e^{-vw})(1-e^{-uw}) }{(uvw)^{\alpha +1}} \, du \, dv \, dw.
\]
Since \( \eps < 1 \), it is not difficult to see that
\[
I \geq \eps^{\frac{3\alpha}{2}} \int_{1}^\infty \int_{1}^\infty \int_{1}^\infty \frac{(1-e^{-uv}) (1-e^{-vw})(1-e^{-uw}) }{(uvw)^{\alpha +1}} \, du \, dv \, dw = c_{\alpha} \eps^{\frac{3\alpha}{2}},
\]
where \( c_{\alpha} \in (0,\infty) \) denotes the value of the above integral. This completes the proof of the lower bound.

}
{\bf (iv)} Let $\eps_n= n^{-1/\alpha}$ then $\E[ \Delta_n(i)] \asymp n^{1/2}$. The above computations also shows that $\Delta_n=\sum_{i=1}^n \Delta_n(i)$ behaves as 
\begin{equation}\label{eq:epsDelta}\E[\Delta_n]\asymp  n^{3/2}
\end{equation}

For studying the concentration of the latter quantity, we start by  evaluating the second moment:
\begin{equation}
       \mathbf{E}\left[{\Delta_n^2}\right] =   \mathbf{E}\left[ \sum_{i,j,k}\,^{'} \sum_{u,v,w}\,^{'}  a_{ij} a_{ik} a_{jk} a_{uv} a_{uw} a_{vw} \right] = A + B + C + D
\end{equation}
where $A$ represents the term in which there is no intersection between the triples of indices of the two summations ($(u, v, w) \neq (i, j, k)$), that is, $|\{u, v, w\}\cap \{i,j,k\}|=0$, $B$ is the term in which there is an intersection of 1 index, that is, $|\{u, v, w\}\cap \{i,j,k\}|=1$ $C$ an intersection of 2 indices, that is, $|\{u, v, w\}\cap \{i,j,k\}|=2$ and $D$ is the term in which all the indices coincide $|\{u, v, w\}\cap \{i,j,k\}|=3$. Above,  $\sum_{i,j,k}\,^{'}$ means sum over distinct indices.\\ 

\textbf{(A) No common indices:}
\begin{equation}
\begin{split}
  A= \mathbf{E}\left[ \sum_{i,j,k}\,^{'} \sum_{(u,v,w) \neq (i,j,k)}\,^{'}  a_{ij} a_{ik} a_{jk} a_{uv} a_{uw} a_{vw} \right]  &=   \mathbf{E}\left[ \sum_{i,j,k}\,^{'}   a_{ij} a_{ik} a_{jk} \right] \mathbf{E}\left[ \sum_{u,v,w}\,^{'} a_{uv} a_{uw} a_{vw} \right]  \\
   &=  \mathbf{E}\left[\Delta_n\right]^2.
\end{split}
\end{equation}

\textbf{(B) One common index:} 
\begin{equation}
    \begin{split}
   B= \mathbf{E}\left[ \sum_{i,j,k}\,^{'}    \sum_{1 \, intersection}\,^{'} a_{ij} a_{ik} a_{jk} a_{uv} a_{uw} a_{vw} \right]  &=\mathbf{E}\left[ \sum_{i,j,k} a_{ij}a_{ik}a_{jk} \, 3\sum_{v,w}a_{vw}(a_{iv}a_{iw} + a_{jv}a_{jw} + a_{kv}a_{kw}) \right]\\
    &\leq \mathbf{E}\left[ \sum_{i,j,k} a_{ij}a_{ik}a_{jk} \, 9\sum_{v,w}a_{vw} \right]
    = 9 \,n  \mathbf{E}\left[   \Delta_n \right]  \, \mathbf{E}\left[   D_n(i) \right].
    \end{split}
\end{equation}


\textbf{(C) Exactly two common indices:}
\begin{equation}
    \begin{split}
     C=   \mathbf{E}\left[ \sum_{i,j,k}\,^{'}    \sum_{2 \, intersections}\,^{'} a_{ij} a_{ik} a_{jk} a_{uv} a_{uw} a_{vw} \right]  & \leq 6 \, n \,  \mathbf{E}\left[ \sum_{i,j,k}\,^{'}  a_{ij} a_{ik} a_{jk}  \right]  =  6 \, n \,   \mathbf{E}\left[   \Delta_n  \right].
    \end{split}
\end{equation}

\textbf{(D) All indices match:}
\begin{equation}
  D=  \mathbf{E}\left[ \sum_{i,j,k}\,^{'}      a_{ij} a_{ik} a_{jk} a_{ij} a_{ik} a_{jk} \right] = \mathbf{E}\left[ \sum_{i,j,k}\,^{'}      a_{ij} a_{ik} a_{jk}  \right] =  \mathbf{E}\left[   \Delta_n \right].
\end{equation}
Therefore:
\begin{equation}
\begin{split}
    \frac{\mathrm{Var}\left(\Delta_n\right) }{\mathbf{E}\left[\Delta_n\right]^2 } &= \frac{\mathbf{E}\left[\Delta_n^2\right] - \mathbf{E}\left[\Delta_n\right]^2 }{\mathbf{E}\left[\Delta_n\right]^2 } = \frac{B+C+D}{\mathbf{E}\left[\Delta_n\right]^2 } \\
    &\leq \frac{ 9 \,n  \mathbf{E}\left[   \Delta_n \right]  \, \mathbf{E}\left[   D_n(i) \right]  + 6 \, n \,   \mathbf{E}\left[   \Delta_n  \right]  + \mathbf{E}\left[   \Delta_n \right] }{\mathbf{E}\left[   \Delta_n \right] ^2}\\
    &\le \frac{ c_7 (n^{5/2}
     \log n   + n^{5/2}  + n^{3/2})
 }{  c_6 n^3}= \O{\frac{\log n}{n^{1/2}}},\\
\end{split}
\end{equation}
where $c_6$ and $c_7$ are positive constants, taken respectively  from equations \eqref{eq:epsDelta} and \eqref{eq:expectdeg}. Now using Chebyshev's inquality it follows that for any $\delta>0$,
$$\prob\left( \Big{|}\frac{ \Delta_n}{\E[ \Delta_n]}-1\Big{|}\ge \delta\right)\le   \frac{\mathrm{Var}\left(\Delta_n\right) }{\delta^2\mathbf{E}\left[\Delta_n\right]^2 }= \O{\frac{\log n}{n^{1/2}}}.$$
This completes the proof of the first statement in Part (iv). The second one follows from the very same computations.

\end{proof}

\section{Absence of dust: Proof of Proposition \ref{Th 3}}\label{Proof: Th 3}

\revise{
We first begin with some nice bounds for incomplete Gamma function. 
\begin{lemma}\label{lemma:incompletegamma}
Let $s=(1-\alpha)\in (0,1)$ and let
$$\Gamma(s, x)= \int_x^\infty e^{-z}z^{s-1}dz, \, \, x>0.$$ Then
$$e^{-x} (1+x)^{s-1}\le \Gamma(s,x) \le e^{-x}\Gamma(s) \left(1+\frac{x}{s}\right)^{s-1}.$$
\end{lemma}
\begin{proof}
The proof is a standard use of Jensen's inequality but we provide the details for completeness. First we show the lower bound.
Let $$\Gamma(s,x)= \int_x^\infty z^{s-1} e^{-z}dz= e^{-x} \int_0^\infty (z+x)^{s-1} e^{-z}dz=e^{-x}\E[(Z+x)^{s-1}]$$
where $Z$ is an exponential random variable with parameter $1$. Define the function $g(t)=(t+x)^{s-1}$. As $s\in (0,1)$, $g(t)$ is a convex function. Hence by Jensen's inequality we have
$$\Gamma(s,x)= e^{-x}\E[ g(Z)]\ge e^{-x}g(\E[Z])=e^{-x}g(1),$$
which gives the lower bound. For the upper bound, we again represent the incomplete Gamma function as follows.
$$\Gamma(s, x)= e^{-x}\int_0^{\infty} \left(1+\frac{x}{z}\right)^{s-1} z^{s-1}e^{-z}dz=e^{-x}\Gamma(s)\E\left[\left(1+\frac{x}{G}\right)^{s-1}\right],$$
where $G$ is a Gamma distribution with shape parameter $s$ and rate $1$. Using the function $h(t)= (1+x/t)^{s-1}$ which is a concave function and using the reverse Jensen's inequality we have
$$\Gamma(s,x)=e^{-x}\Gamma(s)\E[h(G)]\le e^{-x}\Gamma(s) h(\E[G])= e^{-x}\Gamma(s) h(s)$$
where we used $\E[G]=s$. This gives the upper bound. 
\end{proof}

The following is a non-standard bound which improves the well-known bound $1-e^{-x}\le x$.
\begin{lemma}\label{lemma:surprising}
For $\alpha\in [0,1]$ and $x>0$, we have
$$1-e^{-x} \le x^{\alpha}.$$
\end{lemma}
\begin{proof}
The proof follows by considering the function $h(\alpha, x)= 1-e^{-x}-x^{\alpha}$ and studying the derivative with respect to $\alpha$, $\frac{\partial h(\alpha, x)}{\partial \alpha}=- x^\alpha \ln x$.
\end{proof}

The next lemma gives a bound on the conditional degree distribution. 
\begin{lemma} 
 For $i\in [n]$ and $j\neq i$ and $w>1$, we have
\begin{equation}\label{cond:degree}
 \eps_n^{\alpha} w^{\alpha} e^{-(1+\alpha) \eps_n w}\le \E[ p_{ij}\mid W_i=w] \le \eps_n^{\alpha}w^\alpha(1+\Gamma(1-\alpha) e^{-\eps_n w})
 \end{equation}
 In particular, for fixed $w>1$, 
 
 \begin{equation}\label{asymp}\E[ p_{ij}\mid W_i=w]\sim \Gamma(1-\alpha)\eps_n^{\alpha} w^{\alpha}, \text{ as $\eps_n\downarrow 0$}.\end{equation}
\end{lemma}
\begin{proof}
We drop the subscript $n$ from $\eps_n$ for notational simplicity.
\begin{align*}
    \E[ p_{ij}\mid W_i=w]&=  \E\left[ \left(1-e^{-\eps W_j w}\right)\right]\\
    &= \int_0^\infty (1-e^{-\eps w u}) F_W(du)= \int_0^\infty \int_0^u e^{-\eps w z} (\eps w) dz F_W(du)\\
    &= \eps w \int_0^\infty e^{-\eps w z}\prob(W_j>z) dz.
\end{align*}
where we used Fubini's theorem to swap the integrals. Using the fact that $W_j$ are supported on $(1,\infty)$ we get that
\begin{align}
    \E[ p_{ij}\mid W_i=w]&=\eps w \int_0^1 e^{-\eps w z} dz+  \eps w\int_1^\infty e^{-\eps w z}z^{-\alpha} dz\nonumber\\
    &=(1-e^{-\eps w})+  \eps^\alpha w^\alpha \int_{\eps w}^\infty e^{-z}z^{-\alpha} dz=  (1-e^{-\eps w}) + \eps^\alpha w^\alpha \Gamma(1-\alpha, \eps w), \label{eq:intermediate}
\end{align}
where $\Gamma(1-\alpha, \eps w)$ is an incomplete Gamma function. Now we can use the bounds from Lemma \ref{lemma:incompletegamma} and Lemma \ref{lemma:surprising}. So for the upper bound we have
$$ \E[ p_{ij}\mid W_i=w]\le \eps^{\alpha}w^{\alpha}+ \eps^\alpha w^\alpha \Gamma(1-\alpha) e^{-\eps w}\left( 1+ \frac{\eps w}{1-\alpha}\right)^{-\alpha}$$
Now using the trivial bounds  $1+ \frac{\eps w}{1-\alpha}\ge 1$ we have
$$\E[ p_{ij}\mid W_i=w]\le \eps^{\alpha} w^{\alpha}(1+\Gamma(1-\alpha) e^{-\eps w}).$$

For the lower bound, we ignore the first term in \eqref{eq:intermediate} and use the lower bound in Lemma \ref{lemma:incompletegamma}. We have 
\begin{align*}
  \E[ p_{ij}\mid W_i=w]\ge \eps^\alpha w^{\alpha}\Gamma(1-\alpha, \eps w)
  \ge \eps^{\alpha} w^\alpha e^{-\eps w}(1+\eps w)^{-\alpha}\ge \eps^{\alpha}  w^{\alpha} e^{-\eps w}e^{-\alpha \eps w}
\end{align*}
where we used $1+x\le e^x$, for $x>0$ and hence $(1+x)^{-\alpha}\ge e^{-\alpha x}$. This completes the proof of the lower bound. The asymptotic estimate in \eqref{asymp} follows from \eqref{eq:intermediate}.
\end{proof}


\begin{proof}[{\bf Proof of Proposition \ref{Th 3}}]
From the definition of the number of isolated vertices in \eqref{Nisolated}, we compute

\begin{equation} \label{eq: E[N0] 1}
\begin{split}
    \mathbf{E} \left[  N_0 \right]  &= \sum_{i=1}^n \mathbf{P} (i \; {\text{ is  isolated}})  =  \sum_{i=1}^n \E \left[ \prod_{k \neq i} (1-p_{ik} ) \right] = \sum_{i=1}^n \E \left[ \E_{W_i}\left[\prod_{k \neq i} e^{-\eps W_i W_k}\right] \right] \\
    &= \sum_{i=1}^n \E\left( \prod_{k\neq i}\E_{W_i} \left[  e^{-\eps W_i W_k} \right] \right) = n \E\left[ \left(\E_{W_1}[e^{-\eps W_1 W_2}] \right)^{n-1}\right],
\end{split}
\end{equation}
where we have used the property of the weights being independent and identically distributed.

We can then estimate asymptotically the last expression by means of Eq. \eqref{asymp} to obtain:

\begin{align}\label{asymp2}
\E\left[ \left(\E_{W_1}[e^{-\eps W_1 W_2}] \right)^{n-1}\right]&=\E\left[e^{(n-1)\log(1- 
\E_{W_1}(p_{12}))}\right]\sim \E\left[e^{-(n-1)\Gamma(1-\alpha)\eps_n^\alpha W_1^{\alpha}}\right],
\end{align}
from which, by plugging the estimate \eqref{asymp2} into Eq. \eqref{eq: E[N0] 1} we get that:
$$\E[N_0]\sim n\E\left[e^{-(n-1)\Gamma(1-\alpha)\eps_n^\alpha W_1^{\alpha}}\right]=
\E\left[e^{-(n-1)\eps_n^\alpha\left(W_1^\alpha-\frac{\log n}{(n-1)\eps_n^{\alpha}}\right)}\right].$$
By Markov inequality $P(N_0\geq 1)\leq \E[N_0]$, and noticing that the right hand side above goes to zero, as $n$ grows, as soon 
as $\eps_n^{\alpha}\gg \frac{\log n}{(n-1)}$, 
the claim in \eqref{noDust} follows.
\end{proof}

}

\section*{{Acknowledgments}} 
{Part of this work started during M. Lalli's internship in Lorentz Institute of Theoretical Physics in Leiden which was supported by the ERASMUS+ Grant Agreement n. 2020-1-IT02-KA103-077946/03. L. Avena and R.S. Hazra were supported by NWO Gravitation Grant 024.002.003-NETWORKS. 
D. Garlaschelli acknowledges support from the Dutch Econophysics Foundation (Stichting Econophysics, Leiden, the Netherlands). His work is also supported by the European Union - NextGenerationEU - National Recovery and Resilience Plan (Piano Nazionale di Ripresa e Resilienza, PNRR), project `SoBigData.it - Strengthening the Italian RI for Social Mining and Big Data Analytics' - Grant IR0000013 (n. 3264, 28/12/2021) (\url{https://pnrr.sobigdata.it/}). The authors also thank the referees for their valuable comments. We thank Elena Matteini for spotting a mistake in the previous version of the article. 

}

\bibliographystyle{abbrvnat}
\bibliography{reference.bib}

\end{document}